\documentclass[12pt,a4paper]{amsart}
\usepackage{fullpage}
\usepackage{ytableau}
\usepackage{mathrsfs}
\usepackage{CJK}
\usepackage{geometry}   
\usepackage{setspace}
\usepackage{amsfonts}
\usepackage{xy}
\usepackage{cite}
\usepackage{amssymb}
\usepackage{color, latexsym}
\usepackage{graphicx}
\usepackage{amsbsy,amscd}
\usepackage{fancyhdr}
\usepackage[colorlinks=true,
           citecolor=blue,
           linkcolor =red]{hyperref}
\xyoption{all}
\allowdisplaybreaks
\makeatletter
\@namedef{subjclassname@2020}{\textup{2020} Mathematics Subject Classification}
\makeatother

\xyoption{all}
\allowdisplaybreaks

\fontsize{9pt}{9pt}\selectfont

\def\Number#1{\refstepcounter{equation}
              \leqno(\theequation)\if*#1%
              \else\def\@currentlabel{{\rm\theequation}}\label{#1}%
              \fi}

\numberwithin{equation}{section}
\swapnumbers
\newtheorem{theorem}[equation]{Theorem}
\newtheorem{lemma}[equation]{Lemma}
\newtheorem{proposition}[equation]{Proposition}
\newtheorem{corollary}[equation]{Corollary}
\theoremstyle{definition}
\newtheorem{definition}[equation]{Definition}
\newtheorem{example}[equation]{Example}
\theoremstyle{remark}
\newtheorem{remark}[equation]{Remark}

\begin{document}
\setlength{\itemsep}{-0.25cm}
\fontsize{12}{\baselineskip}\selectfont
\setlength{\parskip}{0.30\baselineskip}
\vspace*{-4mm}
\title[Brenti--Welker identity]{\fontsize{10}{\baselineskip}\selectfont A categorification
 of the Brenti--Welker identity}
\author{Deke Zhao}
\address{\bigskip\hfil\begin{tabular}{l@{}}
          Department of  Mathematics\\
            Beijing Normal University at Zhuhai, Zhuhai, 519087\\
             China\\
             E-mail: \it deke@amss.ac.cn \hfill
          \end{tabular}}
 \author{Zhankui Xiao}
\address{\bigskip\hfil\begin{tabular}{l@{}}
          School of Mathematical Sciences\\
            Huaqiao University, Quanzhou, 362021\\
             China\\
             E-mail: \it zhkxiao@hqu.edu.cn \hfill
          \end{tabular}}      
\thanks{Zhao is supported by Guangdong Basic and Applied Basic Research Foundation (Grant No. 2023A1515010251) and the National Natural Science Foundation of China (No.~11871107). Xiao is supported by the Natural Science Foundation of Xiamen, China (No. 3502Z202373054). Both authors are supported by the Open Project Program of Key Laboratory of Mathematics and Complex System Beijing Normal University (Grant No. K202402).}
\subjclass[2020]{Primary 05E10; Secondary 20C30, 05E99.}

\keywords{Euerlian numbers; Foulkes characters; Symmetric groups; Difference}
\begin{abstract}The paper aims to provide a categorification of the Brenti--Welker identity involving Eulerian numbers in (Adv. Appl. Math. 42 (2009): 545--556)  by lifting it from an enumerative equality to an isomorphism of symmetric group representations. To do so, we study the decomposition of the tensor product of $(\mathbb{C}^r)^{\otimes n}$ and modules
affording Foulkes characters as modules of the symmetric group.
The main ingredient of the proof is a combinatorial identity which may be of independent interest.
\end{abstract}
\maketitle
\vspace*{-10mm}
\section{Introduction}\label{Sec:Introduction}

The Veronese construction for formal power series are well studied in commutative algebra and
algebraic geometry because of its connection with Hilbert series of a graded algebra, see \cite{Beck-Stapledon,Jo18,Jo22} for example.
It also has an interesting interpretation in the Ehrhart theory of lattice polytopes \cite{Beck-Robins}.
Motivated by the Veronese construction, Brenti and Welker \cite{Brenti-Welker} studied the transformation of the numerator
polynomial of a rational formal power series after taking a subsequence of the coefficients and computing its generating series.
Going further, they described the Veronese construction using two different bases for the ring of formal power series of
some specified form, the one known as ``monomial basis", and the other related to the Eulerian polynomials.
By considering a change of basis for this transformation, they showed an identity involving the
Eulerian numbers, which is now referred as the {\em Brenti--Welker identity}.

To state the Brenti--Welker identity, we need introduce some notation.
Let $n$ be a positive integer and $\mathfrak{S}_n$ be the symmetric group of degree $n$.
Recall that the \emph{Eulerian number} $E_{n}(k)$ counts the number of elements in $\mathfrak{S}_n$ having $k$ descents, see Equ.~(\ref{Equ:Eulerian-number}) below.
We refer the reader to the monograph \cite{P} for a comprehensive introduction to these Eulerian numbers.
For non-negative integers $r,d$, we denote by $c(r,n,d)$ the number of integer solutions of
equation \begin{equation*}
  x_1+x_2+\cdots+x_{n}=d,
\end{equation*}
where $0\leq x_1,\ldots, x_{n}\leq r$. By convention, we write $c(r,0,d)=\delta_{0,d}=c(0,n,d)$.

\begin{theorem}[\protect{Brenti--Welker identity, \cite[Proposition~2.3]{Brenti-Welker}}]\label{Them:BW-identity}
Let $r,n$ be positive integers. Then
\begin{equation*}
  \sum_{k=1}^nc(r-1,n+1,ir-k)E_n(k-1)=r^nE_n(i-1)
\end{equation*}
for $i=1, \ldots,n$. In particular,
\begin{equation*}
 \sum_{k=1}^nc(r-1,n+1,r-k)E_n(k-1)=r^n=\sum_{k=1}^nc(r-1,n+1,nr-k)E_n(k-1).
\end{equation*}
\end{theorem}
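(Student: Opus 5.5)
We prove the identity with generating functions, via the Veronese construction that motivated Brenti and Welker. The basic input is the Carlitz (Worpitzky) identity
\begin{equation*}
\sum_{m\ge 0}(m+1)^n t^m=\frac{A_n(t)}{(1-t)^{n+1}},\qquad A_n(t)=\sum_{k=1}^{n}E_n(k-1)t^{k-1}.
\end{equation*}
This is standard: both sides are obtained from $\sum_m t^m=(1-t)^{-1}$ by applying $(t\frac{d}{dt})$ with a shift. Alternatively, it follows from counting pairs consisting of a permutation and a weakly increasing word compatible with its descents.

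First we take the $r$-th Veronese section of this series. On the one hand, extracting the terms $t^{m}$ with $m\equiv r-1 \pmod r$ and substituting gives
\begin{equation*}
\sum_{j\ge0}(jr+r)^n s^{j}=r^n\sum_{j\ge0}(j+1)^ns^j=\frac{r^nA_n(s)}{(1-s)^{n+1}}.
\end{equation*}
On the other hand, we compute the same section from the right-hand side of Carlitz's identity. Write
\begin{equation*}
\frac{1}{(1-t)^{n+1}}=\frac{(1+t+\cdots+t^{r-1})^{n+1}}{(1-t^r)^{n+1}},
\end{equation*}
and note that $(1+t+\cdots+t^{r-1})^{n+1}=\sum_d c(r-1,n+1,d)t^d$. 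Hence
\begin{equation*}
\frac{A_n(t)}{(1-t)^{n+1}}=\frac{P(t)}{(1-t^r)^{n+1}},\qquad P(t)=\sum_{d}\Bigl(\sum_{k=1}^nc(r-1,n+1,d-k+1)E_n(k-1)\Bigr)t^d.
\end{equation*}
Since the denominator is a series in $t^r$, the section of exponents $\equiv r-1\pmod r$ is obtained by keeping only those terms of $P$. Setting $d=ir-1$ and $s=t^r$ (after dividing by $t^{r-1}$), we get
\begin{equation*}
\frac{\sum_{i}\bigl(\sum_k c(r-1,n+1,ir-k)E_n(k-1)\bigr)s^{i-1}}{(1-s)^{n+1}}.
\end{equation*}

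Comparing the two numerators, which is legitimate because both are divided by the same $(1-s)^{n+1}$, gives
\begin{equation*}
\sum_k c(r-1,n+1,ir-k)E_n(k-1)=r^nE_n(i-1)
\end{equation*}
for all $i$. This is the identity in Theorem~\ref{Them:BW-identity}.

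The only delicate point is the bookkeeping of the indices. We must check that $P$ has degree at most $(n+1)(r-1)+n-1$, so that only $i=1,\dots,n$ (plus possibly vanishing terms) appear, and that the shift by $r-1$ matches the convention $E_n(k-1)$ rather than $E_n(k)$. Both are routine once the residue class $r-1$ is chosen, which is forced by $(m+1)^n$ being evaluated at multiples of $r$.

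The two special cases then follow from $E_n(0)=E_n(n-1)=1$.
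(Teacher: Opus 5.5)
Your proof is correct, and it takes a different route from the paper's. You work only with generating functions: Carlitz's identity, the factorization $(1-t)^{-(n+1)}=(1+t+\cdots+t^{r-1})^{n+1}(1-t^r)^{-(n+1)}$, and extraction of the exponents $\equiv r-1\pmod r$ using $(jr+r)^n=r^n(j+1)^n$.

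The paper instead obtains the identity as a dimension count from Theorem~\ref{Them:Main}:
it expands $(\mathbb{C}^r)^{\otimes n}$ in Foulkes modules via \eqref{Equ:chi=Foulkes};
it multiplies by $\Phi^n_i$ using the structure constants of Theorem~\ref{Them:decom-numbers};
it collapses the resulting triple sum to $c(r-1,n+1,ir-k)$ with the new identity Proposition~\ref{Prop:n+ar-k}, proved by finite differences and induction on $r$;
it evaluates at $\boldsymbol{e}$ and finally uses palindromicity $E_n(k-1)=E_n(n-k)$.

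Your argument is much shorter and needs neither Proposition~\ref{Prop:n+ar-k} nor palindromicity. The ``delicate point'' you raise is also unnecessary. Equating the two series after multiplying by $(1-s)^{n+1}$ gives the identity for every $i\ge 1$; for $i>n$ it just says the left side vanishes. What the paper's route is designed to buy is the module isomorphism itself, with Proposition~\ref{Prop:n+ar-k} and Corollary~\ref{Cor:C-diagonalized} as by-products.

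Your method in fact categorifies just as cheaply. Replace $(m+1)^n$ by the P\'olya character $\chi^n_{m+1}$. Then \eqref{Equ:chi=Foulkes} gives $\sum_{m\ge0}\chi^n_{m+1}t^m=\bigl(\sum_{k=1}^n\phi^n_{n+1-k}t^{k-1}\bigr)/(1-t)^{n+1}$. Since $\chi^n_{(j+1)r}=\chi^n_r\chi^n_{j+1}$, the same Veronese computation yields $\chi^n_r\phi^n_{n+1-i}=\sum_k c(r-1,n+1,ir-k)\phi^n_{n+1-k}$. By the symmetry $c(r-1,n+1,d)=c(r-1,n+1,(n+1)(r-1)-d)$, this is $\Phi^n_i\otimes(\mathbb{C}^r)^{\otimes n}\cong\bigoplus_k c(r-1,n+1,ir-k)\Phi^n_k$. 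This needs neither Theorem~\ref{Them:decom-numbers} nor Proposition~\ref{Prop:n+ar-k}.

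Note that this has $\Phi^n_k$ where Theorem~\ref{Them:Main} has $\Phi^n_{n+1-k}$, and your indexing is the right one for the labelling $\Phi^n_1=\mathrm{sign}_n$:
at $r=1$, Theorem~\ref{Them:Main} would assert $\Phi^n_i\cong\Phi^n_{n+1-i}$, which is false for $i=n\ge 2$;
the formula quoted in Theorem~\ref{Them:decom-numbers} gives $c^n_{1,1,k}=\delta_{k,1}$, so it is written for the labelling in which $\phi^n_1$ is trivial.

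This discrepancy disappears on taking dimensions, because $E_n(k-1)=E_n(n-k)$. So the Brenti--Welker identity itself is unaffected.
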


Notice that the derivation of the Brenti--Welker identity is in a linear algebra manner in \cite{Brenti-Welker}. Very recently, Valencia-Porras \cite{Porras} presented a combinatorial proof of the Brenti--Welker identity by constructing two
bijections defined on the set of alcoves of the $r$-dilated hypersimplex.

Let $\phi_1^n,\phi_2^n,\ldots, \phi_{n}^n$ be the Foulkes characters of the symmetric group
$\mathfrak{S}_n$ introduced by Foulkes in \cite{Foulkes} and $\Phi_{i}^n$ be the $\mathbb{C}\mathfrak{S}_n$-module
affording the character $\phi_i^n$ ($i=1,\ldots,n$), see Definition~\ref{Def:Foulkes}. Note that $\phi_1^n$ and $\phi_n^n$ are the alternating character and  the trivial character of $\mathfrak{S}_n$ respectively.
Hence $\Phi_1^n$ and $\Phi_n^n$ are the sign representation $\mathrm{sign}_n$ and the trivial representation of $\mathfrak{S}_n$ respectively.
It is well-known that their degrees are Eulerian numbers, i.e.
\begin{equation}\label{Equ:Degree=Euler}
  \phi_i^n(\boldsymbol{e})=E_n(i-1), \text{ for }i=1,\ldots,n,
\end{equation}
where $\boldsymbol{e}$ is the unity of $\mathfrak{S}_n$.
Therefore, the term of the left-hand side of the Brenti--Welker identity is the dimension of certain representation involving $\Phi^n_k$
($k=1,\ldots,n$), while the term of the right-hand side equals the dimension of the $\mathbb{C}\mathfrak{S}_n$-module
$(\mathbb{C}^r)^{\otimes n}\otimes_{\mathbb{C}}\Phi^n_i$.
Inspired by these observations, it is natural to ask for
a categorification of the Brenti--Welker identity, that is, the identity is replaced by an isomorphism of $\mathbb{C}\mathfrak{S}_n$-modules,  which forms our motivation of present work.

The main result of this paper is as follows:

\begin{theorem}\label{Them:Main}Let $r,n$ be positive integers. Then there is a $\mathbb{C}\mathfrak{S}_n$-module isomorphism
\begin{equation*}
\Phi_i^n\otimes_{\mathbb{C}}\left(\mathbb{C}^r\right)^{\otimes n}\cong
\bigoplus_{k=1}^nc(r-1,n+1,ir-k)\Phi_{n+1-k}^n
\end{equation*}for $i=1,\ldots,n$. In particular,
\begin{eqnarray*}
&&\left(\mathbb{C}^r\right)^{\otimes n}\cong
\bigoplus_{k=1}^nc(r-1,n+1,nr-k)\Phi_{n+1-k}^n,\\
&&\mathrm{sign}_n\otimes_{\mathbb{C}}\left(\mathbb{C}^r\right)^{\otimes n}\cong
\bigoplus_{k=1}^nc(r-1,n+1,r-k)\Phi_{n+1-k}^n.
\end{eqnarray*}
\end{theorem}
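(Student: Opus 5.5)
Since all modules here are finite-dimensional $\mathbb{C}\mathfrak{S}_n$-modules, it suffices to compare characters. We will compute the character of $(\mathbb{C}^r)^{\otimes n}$ (the permutation module on words of length $n$ in $r$ letters), which at $\sigma$ of cycle type $\lambda$ equals $r^{\ell(\lambda)}$. The value of $\phi_i^n$ depends only on $n$ and on the number $\ell=\ell(\lambda)$ of cycles, via Foulkes' formula. A convenient form is the generating function, with $\sigma$ of cycle type $\lambda$,
\begin{equation*}
\sum_{i=1}^n \phi_i^n(\sigma)\, t^{i}=(1-t)^{n+1}\sum_{m\geq 0} m^{\ell(\lambda)}t^{m},
\end{equation*}
the character analogue of Worpitzky's identity. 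At $\sigma=\boldsymbol{e}$ this recovers (\ref{Equ:Degree=Euler}). I will take this from the definition (Definition~\ref{Def:Foulkes}) or prove it first if the paper defines $\phi_i^n$ via ribbon characters, using the standard expression $\sum_i \phi^n_i t^i$ in terms of $\sum_m t^m\,\mathrm{char}(\mathrm{Sym}^n(\mathbb{C}^m))$-type generating functions.

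With this in hand, the claim becomes an identity of power series, to be checked for each fixed $\ell$. Write $A_\ell(t)=\sum_i \phi_i^n t^i=(1-t)^{n+1}\sum_m m^\ell t^m$. The left side has $i$-th coefficient $r^\ell\phi_i^n$, and the $r$-multisection (Veronese) of the series is
\begin{equation*}
\sum_m (rm)^\ell t^{m}=r^\ell\sum_m m^\ell t^m .
\end{equation*}
On the other side, $\sum_m (rm)^\ell t^{rm}$ is the $r$-Veronese part of $A_\ell(t)/(1-t)^{n+1}$. Writing $1/(1-t)^{n+1}=(1+t+\cdots+t^{r-1})^{n+1}/(1-t^r)^{n+1}$ and $(1+\cdots+t^{r-1})^{n+1}=\sum_d c(r-1,n+1,d)t^d$, I extract the terms with exponents divisible by $r$. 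This gives
\begin{equation*}
r^\ell \phi_i^n=\sum_k c(r-1,n+1,ir-k)\,\phi^n_k(\sigma),
\end{equation*}
which is exactly the Brenti--Welker linear algebra performed at the level of character values.

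Comparing with the statement, the right-hand side has $\Phi^n_{n+1-k}$ rather than $\Phi^n_k$. So I will also need the symmetry $\phi^n_{n+1-k}=\mathrm{sign}\cdot\phi^n_k$, or equivalently the known relation $\phi^n_k(\sigma)=(-1)^{n-\ell}\phi^n_{n+1-k}(\sigma)$, together with the reflection $c(r-1,n+1,d)=c(r-1,n+1,(n+1)(r-1)-d)$. Matching the indices so that the two reflections cancel correctly is where I expect the bookkeeping to be delicate. Indeed, the paper mentions a new combinatorial identity, suggesting that the authors instead verify the coefficient identity directly via Foulkes' explicit formula $\phi_i^n(\lambda)=\sum_j(-1)^j\binom{n+1}{j}(i-j)^{\ell}$ with a sign twist. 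If the generating-function route runs into sign trouble, I will fall back on substituting that explicit formula and proving the resulting alternating-sum identity in $c(r-1,n+1,\cdot)$ by the expansion $c(r-1,n+1,d)=\sum_j(-1)^j\binom{n+1}{j}\binom{d-jr+n}{n}$.

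The main obstacle is pinning down the correct normalization and sign of $\phi_i^n$ on non-identity classes, so that $\Phi_1^n$ is the sign representation and $\Phi_n^n$ is the trivial one, and verifying that the index reversal $k\mapsto n+1-k$ is consistent with it. The particular cases then follow by setting $i=n$ (trivial) and $i=1$ (sign).
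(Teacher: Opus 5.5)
Your multisection argument is correct and takes a genuinely different, shorter route than the paper. However, it proves the isomorphism with $\Phi^n_k$ in place of $\Phi^n_{n+1-k}$, namely $\Phi^n_i\otimes(\mathbb{C}^r)^{\otimes n}\cong\bigoplus_k c(r-1,n+1,ir-k)\Phi^n_k$. The final reconciliation step you flag cannot be done, because the statement as printed is false for $n\ge 2$.

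\textbf{Indexing.} In the paper's indexing, $\Phi^n_1=\mathrm{sign}_n$. Multiply $\chi^n_r=\sum_j\tbinom{r-1+j}{n}\phi^n_j$ by $t^r$, sum over $r$, and use $\sum_{r\ge0}\tbinom{r-1+j}{n}t^r=t^{n+1-j}/(1-t)^{n+1}$. This gives $(1-t)^{n+1}\sum_m m^{\ell}t^m=\sum_j\phi^n_j(\sigma)t^{n+1-j}$. So your generating function has the indices reversed: in yours the coefficient of $t$ is $1^{\ell}$, so your $\phi^n_1$ is trivial. This is harmless. The matrix $\bigl(c(r-1,n+1,ir-k)\bigr)_{i,k}$ is centrosymmetric, since $c(r-1,n+1,d)=c(r-1,n+1,(n+1)(r-1)-d)$ and $(n+1)(r-1)-\bigl((n+1-i)r-(n+1-k)\bigr)=ir-k$. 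Hence your Veronese extraction gives $r^{\ell}\phi^n_i=\sum_k c(r-1,n+1,ir-k)\phi^n_k$ in either indexing.

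\textbf{Why the last step fails.} The same centrosymmetry shows that the two reflections you hoped would cancel actually compose to a sign twist. The printed right-hand side has character $\sum_k c(r-1,n+1,(n+1-i)r-k)\phi^n_k=r^{\ell}\phi^n_{n+1-i}=\varepsilon\, r^{\ell}\phi^n_i$. So it is $\mathrm{sign}_n\otimes\Phi^n_i\otimes(\mathbb{C}^r)^{\otimes n}$, not the left-hand side. Two concrete checks:
\begin{itemize}
\item For $r=1$ one has $c(0,n+1,i-k)=\delta_{ik}$, so the printed formula asserts $\Phi^n_i\cong\Phi^n_{n+1-i}$. For example, it claims $(\mathbb{C}^1)^{\otimes n}\cong\mathrm{sign}_n$.
\item For $n=r=2$ it asserts $(\mathbb{C}^2)^{\otimes 2}\cong\mathrm{triv}\oplus 3\,\mathrm{sign}$, whereas in fact $(\mathbb{C}^2)^{\otimes 2}\cong 3\,\mathrm{triv}\oplus\mathrm{sign}$.
\end{itemize}
Neither the index bookkeeping nor your fallback alternating-sum verification can succeed. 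Stop after the Veronese step and state the result with $\Phi^n_k$. The two special cases then become $\bigoplus_k c(r-1,n+1,nr-k)\Phi^n_k$ and $\bigoplus_k c(r-1,n+1,r-k)\Phi^n_k$.

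\textbf{Where the paper's proof goes wrong.} It reaches $\Phi^n_{n+1-k}$ through an indexing clash. It combines the expansion of $\chi^n_r$, in which $\phi^n_n$ is trivial, with the closed formula of Theorem~\ref{Them:decom-numbers}, which is valid only for the reversed indexing. That formula gives $c^2_{1,1,1}=1$ and $c^2_{1,1,2}=0$, i.e.\ $\phi^2_1\phi^2_1=\phi^2_1$, which is impossible if $\phi^2_1$ is the sign. In the paper's own indexing the structure constants are $c^n_{i,j,n+1-k}$, because $\phi^n_{n+1-k}=\varepsilon\phi^n_k$. With that correction, Lemma~\ref{Lemm:c=r-c} (a correct combinatorial identity) yields exactly your $\Phi^n_k$ version. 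The dimension count, and hence the Brenti--Welker identity, is unaffected because $E_n(k-1)=E_n(n-k)$.

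\textbf{Comparison of routes.} Yours uses only the Worpitzky-type expansion of $\chi^n_r$ (which the paper quotes from Kerber) and a multisection. In effect it is Brenti--Welker's Veronese computation performed on character values, one cycle count at a time. It needs neither Miller's product formula nor Proposition~\ref{Prop:n+ar-k}. The paper's route, once re-indexed, gives in addition the relation between $c(r-1,n+1,ir-k)$ and the structure constants of Foulkes characters, which is what feeds Corollary~\ref{Cor:C-diagonalized}.
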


To prove Theorem~\ref{Them:Main}, we need the following combinatorial identity, which is  a generalization of a fact proved in \cite[Page~147]{Holte} and it may be of independent interest. Throughout the paper, for non-negative integers $m,n$, we set $\tbinom{m}{n}=0$ when $m<n$.

\begin{proposition}\label{Prop:n+ar-k}Let $r,n,a$ be positive integers. Then
\begin{equation*}
\sum_{j=1}^{n}\tbinom{r-1+j}{n}\sum_{b=1}^j
  (-1)^{j-b}\tbinom{n+1}{j-b}\tbinom{ab+k-1}{n}= \tbinom{ar+n-k}{n}
\end{equation*}
 for $1\leq k\leq n$.
\end{proposition}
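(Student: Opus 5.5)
The plan is to read the inner sum as the coefficients of the $h$-polynomial of the degree-$n$ polynomial $f(b)=\tbinom{ab+k-1}{n}$, and then to finish with Stanley--Popoviciu reciprocity. Throughout, $\tbinom{x}{n}=x(x-1)\cdots(x-n+1)/n!$ is treated as a polynomial in $x$. For every integer $x\geq 0$ this agrees with the paper's convention that $\tbinom{m}{n}=0$ when $m<n$. All arguments of binomials below, namely $ab+k-1$, $r-1+j$ and $ar+n-k$, are non-negative, so the convention never conflicts with the polynomial.

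\emph{Step 1 (the $h$-polynomial).} Put
\[
g(j)=\sum_{b=1}^{j}(-1)^{j-b}\tbinom{n+1}{j-b}f(b).
\]
Since $1\leq k\leq n$, we have $f(0)=\tbinom{k-1}{n}=0$. Hence the sum may start at $b=0$, and $\sum_{j\geq 0}g(j)t^j=(1-t)^{n+1}\sum_{b\geq 0}f(b)t^b=:h(t)$. Because $f$ is a polynomial of degree $n$ in $b$, the standard fact gives that $h$ is a polynomial of degree at most $n$, with $g(0)=f(0)=0$. Therefore $g(j)=0$ for $j>n$ and for $j=0$, and the left-hand side of the proposition equals $\sum_{j=1}^{n}g(j)\tbinom{r-1+j}{n}$. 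Expanding $1/(1-t)^{n+1}=\sum_m\tbinom{m+n}{n}t^m$ inverts the relation:
\[
f(b)=\sum_{j=1}^{n}g(j)\tbinom{b-j+n}{n}\qquad(b\geq 0),
\]
and this holds as a polynomial identity in $b$.

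\emph{Step 2 (reciprocity).} Evaluate this polynomial identity at $b=-r$. On the left,
\[
f(-r)=\tbinom{-ar+k-1}{n}=(-1)^n\tbinom{ar+n-k}{n}.
\]
On the right, $\tbinom{-r-j+n}{n}=(-1)^n\tbinom{r+j-1}{n}$ by the same upper-negation rule. Cancelling $(-1)^n$ gives
\[
\tbinom{ar+n-k}{n}=\sum_{j=1}^n g(j)\tbinom{r-1+j}{n},
\]
which is exactly the claim. This route avoids even having to reverse $h$: the reversal $g(j)\leftrightarrow g(n+1-j)$ that appears in the usual statement of reciprocity is absorbed into the negation of binomials. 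Note also that we need $r\geq 1$ so that $\tbinom{r-1+j}{n}$ with $j\geq 1$ agrees with its polynomial value, which is true because its argument is non-negative.

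\emph{Main obstacle.} The only delicate point is bookkeeping. One must verify that $h$ has degree at most $n$, which follows because the $(n+1)$-st finite difference of a degree-$n$ polynomial vanishes, so the coefficients $g(j)$ for $j\geq n+1$ are zero. One must also verify that Step 1's inversion formula holds as an identity of polynomials in $b$, and not merely for large $b$. Both are routine: the right-hand side of the inversion formula is a polynomial of degree at most $n$ in $b$ that agrees with $f$ at all $b\geq 0$.
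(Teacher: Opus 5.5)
Your proof is correct, and it takes a genuinely different route from the paper.

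\textbf{The paper's route.} It fixes $a$ and $k$ and regards the difference $G(r;a,n)$ of the two sides as a polynomial of degree $n$ in $r$ with $G(0;a,n)=0$. It then checks $G(r;a,n)=0$ at $r=1,\dots,n$ one value at a time. The case $r=1$ is Lemma~\ref{Lemm:r=1}. In the inductive step, the earlier cases are used to solve for $g(n+1-t)$ in terms of the $\binom{ta+n-k}{n}$. Both sides are then expanded in powers of $a$ and matched coefficientwise via Lemma~\ref{Lemm:s=n-s}, which rests on $\vartriangle^{n+1}x^{n-s}=0$.

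\textbf{Your route.} You use $b$ as the variable. The inner sums are the numerator coefficients of $\sum_b f(b)t^b$. Expanding $(1-t)^{-n-1}$ gives the interpolation formula $f(b)=\sum_{j=1}^n g(j)\binom{b-j+n}{n}$ as a polynomial identity. A single evaluation at $b=-r$ with upper negation then yields the identity for all $r$ at once.

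\textbf{How they relate.} Lemma~\ref{Lemm:s=n-s} is exactly the monomial case $f(x)=x^{n-s}$ of the reciprocity you exploit. So the paper proves the same phenomenon coordinatewise in $a$ and assembles it by induction on $r$, while you prove it once for the whole polynomial.

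\textbf{What your approach buys.} It is shorter and avoids the elementary-symmetric bookkeeping and the induction. It recovers Lemma~\ref{Lemm:r=1} as the case $r=1$, where $\binom{r-1+j}{n}=\delta_{j,n}$. It also shows that the proposition is an instance of a general fact: $\sum_{j=1}^n g(j)\binom{r-1+j}{n}=(-1)^n f(-r)$ for any polynomial $f$ of degree at most $n$ with $f(0)=0$. The specific choice $f(b)=\binom{ab+k-1}{n}$ enters only through $f(-r)=(-1)^n\binom{ar+n-k}{n}$. This viewpoint also fits the Ehrhart/Veronese origin of the Brenti--Welker identity.

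\textbf{What the paper's approach buys.} It stays entirely within finite sums of binomials at non-negative arguments, with no power series and no negative evaluations.

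One detail you leave implicit: the interpolation formula must hold for $0\le b<n$, where the terms with $b<j\le n$ have to vanish as polynomials. They do, because then $0\le b-j+n\le n-1$.
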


We would like to remark that Miller \cite{M15} introduced the Foulkes characters of complex reflection groups, which are well-behaved for coincidental complex reflection groups. Hence it is natural and interesting to study the coincidental complex reflection groups versions
of Theorem~\ref{Them:Main} and the Brenti--Welker identity respectively. In particular, the wreath analogue would be useful to
understand the generalized amazing matrices introduced by Nakano and Sadahiro \cite{NS1,NS2,NS3}.

The paper is organized as follows. In Section~\ref{Sec:Preliminaries} we recall some facts related to the representation theory
of symmetric groups, especially on Eulerian numbers and Foulkes characters.
Section~\ref{Sec:Combinatorial-identity} provides a proof of Proposition~\ref{Prop:n+ar-k} by applying the induction argument.
In the last section, we prove our main result Theorem~\ref{Them:Main} and then the Brenti--Welker identity
(Theorem~\ref{Them:BW-identity}). Here, as a byproduct, a remarkable property of the decomposition numbers of the product of Foukles characters
is provided (see Corollary~\ref{Cor:C-diagonalized}).

\section{Preliminaries}\label{Sec:Preliminaries}

This section is devoted to recalling some facts related to the decomposition of the $\mathbb{C}\mathfrak{S}_n$-modules
$\left(\mathbb{C}^r\right)^{\otimes n}\otimes_{\mathbb{C}}\Phi_i^n$ ($i=1,\ldots,n$) in terms of $\Phi_1^n,\ldots,\Phi_n^n$.
Then we also recall a closed formula for $c(r-1,n+1,ir-k)$, where $i,k=1,2,\ldots, n$, see the Introduction for precise definition.

Recall that a  partition $\lambda=(\lambda_1, \lambda_2, \ldots)$ of $n$, denote  $\lambda\vdash n$, is a weakly decreasing sequence  of  nonnegative integers such that $|\lambda|=\sum_{i\geq1}\lambda_i=n$ and we write $\ell(\lambda)$ the length of $\lambda$, i.e. the number of nonzero parts of $\lambda$. It is well-known that the ordinary irreducible representations of $\mathfrak{S}_n$ are parameterized by partitions of $n$. For $\lambda\vdash n$, we denote by $S^{\lambda}$ the Specht module of $\mathfrak{S}_n$ and let $\chi^{\lambda}$ be the character of $S^{\lambda}$.

For an element $\sigma=\sigma(1)\sigma(2)\cdots\sigma(n)$ of the symmetric group $\mathfrak{S}_n$ (in one-line form), we say that $i$ is a \emph{descent} of $\sigma$ if $\sigma(i)>\sigma(i+1)$ and we denote by $\mathrm{Des}(\sigma)$ the set of descents of $\sigma$:
\begin{equation*}
  \mathrm{Des}(\sigma):=\{1\leq i\leq n-1 \mid \sigma(i)>\sigma(i+1)\}.
\end{equation*}

We set $\mathrm{des}(\sigma)$ to be the number of descents (also called the \emph{descent number}) of $\sigma$, that is, $\mathrm{des}(\sigma)=|\mathrm{Des}(\sigma)|$. The \emph{Eulerian number} $E_{n}(k)$ counts the number of elements in $\mathfrak{S}_n$ having $k$ descents:
 \begin{equation}\label{Equ:Eulerian-number}
   E_{n}(k)=\left|\{\sigma\in \mathfrak{S}_n \mid \mathrm{des}(\sigma)=k\}\right|.
 \end{equation}
 For example if $\sigma=3142657\in \mathfrak{S}_7$, $\mathrm{Des}(\sigma)=\{1,3,5\}$.

 According to Foulkes (see e.g. \cite[\S2.1]{D-Fulman}), for any subset $U$ of $\{1,2,\ldots,n-1\}$, we can construct a ribbon shape (also called a rim of Young diagram) $R(U)$ beginning with a single box and
sequentially adding the next box below the last box if $i\in U$, and to the left of the last
box if $i\notin U$.  For example, if $U=\{1,3,5\}\subset\{1,2,3,4,5,6\}$ then the ribbon shape $R(U)$ is constructed as follows:
\begin{equation*}
\begin{ytableau}\\ \end{ytableau}\qquad
\begin{ytableau} \\ \\ \end{ytableau}\qquad
\begin{ytableau}\none & \\ &\\ \end{ytableau}\qquad
\begin{ytableau} \none & \\ &\\  \\ \end{ytableau}\qquad
\begin{ytableau}\none&\none&\\\none&&\\&\\\end{ytableau}\qquad
\begin{ytableau}\none&\none&\\\none&&\\&\\ \\\end{ytableau} \qquad
 \begin{ytableau}\none&\none&\none&\\\none&\none&&\\\none&&\\&\\\end{ytableau}
\end{equation*}
The final skew shape $R(U)$ will have $n$ boxes and be the lower rim of a partition $\alpha(U)$; in this example, $\alpha(U)=(4,4,3,2)$ and $R(U)=(4,4,3,2)/(3,2,1)$.
 The skew shape $R(U)$ corresponding to $U\subset \{1,2,\ldots,n-1\}$ gives a skew
character $\chi^{R(U)}$ and a skew representation $S^{R(U)}$ of the symmetric group $\mathfrak{S}_n$. Foulkes showed that the dimension of $S^{R(U)}$ is the number of permutations with given descent set $U$ (see e.g. \cite{Kerber-T}).

\begin{definition}\label{Def:Foulkes}For $i=1,2,\ldots,n$, the \textit{Foulkes character} $\phi_i^n$ of $\mathfrak{S}_n$ is defined as the sum of $\chi^{R(U)}$ over all $U$ with $n-i$ descents. The $\mathfrak{S}_n$-representation $\Phi_i^n$ affording the character $\phi_i^n$ are  defined as the sum of $S^{R(U)}$ over all $U$ with $n-i$ descents.
\end{definition}

Clearly $\phi_1^n$ and $\phi_n^n$ are the alternating character and  trivial character of $\mathfrak{S}_n$ respectively. Thus $\Phi_1^n$ and $\Phi_n^n$ are the sign representation and  trivial representation of $\mathfrak{S}_n$ respectively. Foulkes showed that the dimension of $\phi_i^n$ is
the Eulerian number $E_n(i-1)$ and that $\phi_i^n(\sigma)$ only depends on $\sigma$ through the number of cycles in $\sigma$, that is,  let $\ell_n(\sigma)$ be the number of cycles of a permutation $\sigma\in \mathfrak{S}_n$, then
\begin{equation*}
  \phi_i^n(\tau)=\phi_i^n(\sigma) \text{ whenever }\ell_n(\tau)=\ell_n(\sigma),
\end{equation*}
where $i=1, \ldots, n$. For the history and properties of Foulkes characters, see \cite[Chapter~8.5]{Kerber}.

Let us remark that the decomposition of the Foulkes characters can be obtain by applying the classical Schur--Weyl duality and $\Phi_i^n$ ($1\leq i\leq n$) can be constructed via the coinvariant algebra of $\mathfrak{S}_n$ (see \cite[Proposition~2.5 and Theorem~4.4]{GGK}).

Now let $\mathrm{CF}_{\ell_n}(\mathfrak{S}_n)$ be the space  of class functions on $\mathfrak{S}_n$ that depend only on the length $\ell_n$. Then $\phi_1^n,\ldots, \phi_n^n$
form a basis for $\mathrm{CF}_{\ell_n}(\mathfrak{S}_n)$, see e.g. \cite[Theorem~8.5]{Kerber}.
It is clear that the product $ \phi_i^n\phi_j^n$ is still a class function of $\mathfrak{S}_n$ depending only on the length $\ell_n$
and hence we write
\begin{equation}\label{Equ:Foulkes-product-decom}
 \phi_i^n\phi_j^n=\sum_{k=1}^nc_{i,j,k}^n\phi_k^n
\end{equation}
for $1\leq i,j,k\leq n$. The coefficients $c_{i,j,k}^n$ can be calculated by the following closed formula.

\begin{theorem}[\protect{\cite[Theorem~2.8]{M21}}]\label{Them:decom-numbers}Let $n$ be a positive integer. Then
 \begin{equation*}
  c_{i,j,k}^n=\sum_{a=1}^i\sum_{b=1}^j
  (-1)^{i+j-a-b}\tbinom{n+1}{i-a}\tbinom{n+1}{j-b}\tbinom{n+ab-k}{n}
 \end{equation*}
 for $i,j,k=1,\ldots,n$.
\end{theorem}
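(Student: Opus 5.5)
The plan is to express everything through the one family of class functions that is easy to multiply, namely $\sigma\mapsto x^{\ell_n(\sigma)}$. This is the character of the permutation module $(\mathbb{C}^x)^{\otimes n}$ for positive integers $x$. Since $x^{\ell_n}\cdot y^{\ell_n}=(xy)^{\ell_n}$, products become trivial in this family. The work is therefore to pass between the Foulkes basis $\phi^n_1,\ldots,\phi^n_n$ of $\mathrm{CF}_{\ell_n}(\mathfrak{S}_n)$ and the functions $a^{\ell_n}$ for $a=1,\ldots,n$.

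\emph{Step 1 (Worpitzky-type expansion).} I would first establish, for every $\sigma\in\mathfrak{S}_n$ and every integer $x\ge 0$,
\begin{equation*}
x^{\ell_n(\sigma)}=\sum_{k=1}^n\tbinom{x+n-k}{n}\phi_k^n(\sigma).
\end{equation*}
One route is through the ribbon description in Definition~\ref{Def:Foulkes}. The character of $(\mathbb{C}^x)^{\otimes n}$ corresponds to $h_1^n$ in $x$ variables. Expanding $h_1^n=\sum_{U}s_{R(U)}$ over ribbons and specializing to $x$ variables, the principal specialization of a ribbon Schur function with $|U|$ descents contributes $\tbinom{x+n-1-|U|}{n}$ by the standard stars-and-bars count of weakly increasing words with prescribed strict descents. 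Grouping by $|U|=n-k$ should give exactly the coefficient above, up to the indexing convention.

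I would fix that convention by checking the endpoints. At $\sigma=\boldsymbol e$ the identity must reduce to Worpitzky's identity, via \eqref{Equ:Degree=Euler}. At $x=1$ only one term survives, and it must equal the trivial character $\phi_n^n$ (or the sign character, which would flip $k\mapsto n+1-k$). I expect this bookkeeping of which index corresponds to the trivial character to be the main place where errors creep in. Both sides are polynomials in $x$ of degree at most $n$, so the identity then holds as a polynomial identity.

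\emph{Step 2 (inversion).} Next I would invert Step~1 by a finite-difference argument. Apply $\sum_{a}(-1)^{i-a}\tbinom{n+1}{i-a}(\,\cdot\,)|_{x=a}$ to both sides. On the right this produces the $(n+1)$-st backward difference of the polynomials $\tbinom{x+n-k}{n}$, truncated at $a\ge1$. Because $\tbinom{a+n-k}{n}$ vanishes for $a\le k-n$ and the full $(n+1)$-st difference of a degree-$n$ polynomial is zero, only a Kronecker delta $\delta_{i,k}$ should survive. This yields
\begin{equation*}
\phi_i^n=\sum_{a=1}^{i}(-1)^{i-a}\tbinom{n+1}{i-a}\,a^{\ell_n}.
\end{equation*}
Verifying that the surviving term is exactly $\delta_{i,k}$ is a routine binomial-coefficient check: it is the coefficient extraction $[t^{i}]\,(1-t)^{n+1}\sum_{a\ge1}\tbinom{a+n-k}{n}t^a=[t^i]\,t^{k}$.

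\emph{Step 3 (product).} Finally I would multiply the two expansions from Step~2 and use $a^{\ell_n}b^{\ell_n}=(ab)^{\ell_n}$:
\begin{equation*}
\phi_i^n\phi_j^n=\sum_{a=1}^i\sum_{b=1}^j(-1)^{i+j-a-b}\tbinom{n+1}{i-a}\tbinom{n+1}{j-b}(ab)^{\ell_n}.
\end{equation*}
I would then re-expand $(ab)^{\ell_n}$ by Step~1 with $x=ab$, which gives $(ab)^{\ell_n}=\sum_k\tbinom{n+ab-k}{n}\phi_k^n$. Since $\phi^n_1,\ldots,\phi^n_n$ are linearly independent, comparing coefficients with \eqref{Equ:Foulkes-product-decom} gives the stated formula for $c^n_{i,j,k}$. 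The only real obstacle is getting Step~1 with the correct indexing; once it is settled, Steps~2 and~3 are formal.
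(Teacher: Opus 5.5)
The paper gives no proof of this statement; it only quotes the formula from Miller. Your scheme is the natural proof: expand $x^{\ell_n}$ in the Foulkes basis, invert with an $(n+1)$-st difference, and multiply using $a^{\ell_n}b^{\ell_n}=(ab)^{\ell_n}$. Steps~2 and~3 are correct consequences of Step~1 \emph{as you wrote it}.

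The genuine gap is the indexing you deferred, and it is not cosmetic. Under Definition~\ref{Def:Foulkes}, $\phi^n_1=\mathrm{sign}_n$. So your Step~1 at $x=1$ asserts $1=\phi^n_1$, which is false for $n\ge 2$. You even say the trivial character is $\phi^n_n$, while your formula forces it to be $\phi^n_1$. The check at $\sigma=\boldsymbol e$ cannot detect this, because $E_n(k-1)=E_n(n-k)$. Your own ribbon grouping with $|U|=n-k$ gives $\tbinom{x+n-1-|U|}{n}=\tbinom{x+k-1}{n}$. Hence the correct expansion in this paper's labeling is Equ.~\eqref{Equ:chi=Foulkes}, $x^{\ell_n}=\sum_k\tbinom{x-1+k}{n}\phi^n_k$. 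Your version is that identity rewritten for the reversed labeling $\psi_k:=\phi^n_{n+1-k}$, in which the trivial character comes first.

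Correcting Step~1 does not lead to the displayed formula. Running Steps~2--3 with Equ.~\eqref{Equ:chi=Foulkes} first gives $\phi^n_{n+1-i}=\sum_{a=1}^{i}(-1)^{i-a}\tbinom{n+1}{i-a}a^{\ell_n}$. This is exactly the inversion displayed at the end of the paper. It then gives $c^n_{n+1-i,\,n+1-j,\,n+1-k}=\sum_{a=1}^i\sum_{b=1}^j(-1)^{i+j-a-b}\tbinom{n+1}{i-a}\tbinom{n+1}{j-b}\tbinom{n+ab-k}{n}$. Since $\phi^n_{n+1-i}=\mathrm{sign}_n\,\phi^n_i$, this is equivalent to the displayed formula with $\tbinom{n+ab-k}{n}$ replaced by $\tbinom{ab+k-1}{n}$.

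No bookkeeping can do better, because the statement as printed is false for the $\phi^n_k$ of Definition~\ref{Def:Foulkes} when $n\ge 2$. For $i=j=1$ it gives $c^n_{1,1,k}=\tbinom{n+1-k}{n}=\delta_{k,1}$, i.e. $\phi^n_1\phi^n_1=\phi^n_1$, whereas $\mathrm{sign}_n^2=\phi^n_n$. So your argument proves the formula for the labeling $\psi_k$, which is the labeling in which the quoted formula is true. You should say so explicitly rather than claim the convention can be settled to yield the statement; the paper itself quotes the formula in the labeling opposite to its own definition and to Equ.~\eqref{Equ:chi=Foulkes}.

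Separately, your justification of Step~1 is not right as written. A single ribbon Schur function does not specialize to a binomial: for $n=3$ and $U=\{2\}$ one has $R(U)=(2,1)$ and $s_{(2,1)}(1^x)=(x^3-x)/3\neq\tbinom{x+1}{3}$. Also, specializing $h_1^n=\sum_U s_{R(U)}$ only recovers the dimension $x^n$, not a class-function identity. The binomial $\tbinom{x+n-1-|U|}{n}$ is the principal specialization of the fundamental quasisymmetric function $F_U$. A correct argument runs as follows:
\begin{itemize}
\item By Schur--Weyl duality, the character of $(\mathbb{C}^x)^{\otimes n}$ is $\sum_\lambda s_\lambda(1^x)\chi^\lambda$.
\item One has $s_\lambda(1^x)=\sum_T\tbinom{x+n-1-\mathrm{des}(T)}{n}$, summed over standard tableaux $T$ of shape $\lambda$.
\item $\chi^\lambda$ occurs in $\phi^n_k$ with multiplicity equal to the number of such $T$ with $n-k$ descents.
\end{itemize}
Alternatively, simply cite Equ.~\eqref{Equ:chi=Foulkes}.
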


Let $\chi_r^n$ be the ``P\'{o}lya character" affording the permutation module $\left(\mathbb{C}^r\right)^{\otimes n}$ of $\mathfrak{S}_n$. Then
\begin{equation}\label{Equ:Polya-character}
  \chi_r^n(\sigma)=r^{\ell_n(\sigma)} \text{ for $\sigma\in \mathfrak{S}_n$}.
\end{equation}
Moreover, we have (see \cite[\S2]{Kerber-T} or \cite[Corollary~8.5.11]{Kerber})
\begin{equation}\label{Equ:chi=Foulkes}
  \chi^n_r=\sum_{j=1}^n\tbinom{r-1+j}{n}\phi^n_{j}
\end{equation}
for any positive integer $r$. Notice that two $\mathbb{C}\mathfrak{S}_n$-modules are isomorphic if and only if
their characters coincide with each other. Thus (\ref{Equ:chi=Foulkes}) shows
\begin{equation}\label{Equ:Rep=chi=Foulkes}
  \left(\mathbb{C}^r\right)^{\otimes n}\cong\bigoplus_{j=1}^n\tbinom{r-1+j}{n}\Phi^n_{j}.
\end{equation}

For $i,j=1,\ldots,n$, thanks to Equ.~\eqref{Equ:Foulkes-product-decom}, we have the isomorphism of $\mathbb{C}\mathfrak{S}_n$-modules
\begin{equation}\label{Equ:Decom-Foulkes-modules-product}
 \Phi_i^n\otimes_{\mathbb{C}}\Phi _j^n\cong
 \bigoplus_{k=1}^nc_{i,j,k}^n\Phi_k^n
\end{equation}

Combining Equs.~(\ref{Equ:Rep=chi=Foulkes}) and (\ref{Equ:Decom-Foulkes-modules-product}), we yield that the following fact.
\begin{corollary}\label{Cor:iso-modules}For $i=1,\ldots,n$,
  \begin{eqnarray*}
 &&\Phi_i^n\otimes_{\mathbb{C}}\left(\mathbb{C}^r\right)^{\otimes n}
 \cong\bigoplus_{k=1}^{n}\sum_{j=1}^nc_{i,j,k}^n\tbinom{r-1+j}{n}\Phi^n_{k}.
\end{eqnarray*}
In particular, for $i=1, \ldots, n$,
\begin{eqnarray*}
 &&\sum_{k=1}^{n}\sum_{j=1}^nc_{i,j,k}^n\tbinom{r-1+j}{n}E_n(k-1)=r^nE_{n}(i-1).
\end{eqnarray*}
\end{corollary}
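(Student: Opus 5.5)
The statement to prove is Corollary~\ref{Cor:iso-modules}. I will obtain it by substituting the decomposition of the P\'olya module into the decomposition of products of Foulkes modules, using that tensor products distribute over direct sums.

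First, I apply Equ.~(\ref{Equ:Rep=chi=Foulkes}), which gives $(\mathbb{C}^r)^{\otimes n}\cong\bigoplus_{j=1}^n\binom{r-1+j}{n}\Phi_j^n$. Tensoring with $\Phi_i^n$ over $\mathbb{C}$ commutes with finite direct sums, and the diagonal $\mathfrak{S}_n$-action is compatible with this. Hence
\begin{equation*}
\Phi_i^n\otimes_{\mathbb{C}}(\mathbb{C}^r)^{\otimes n}\cong\bigoplus_{j=1}^n\tbinom{r-1+j}{n}\,\Phi_i^n\otimes_{\mathbb{C}}\Phi_j^n .
\end{equation*}

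Second, I substitute Equ.~(\ref{Equ:Decom-Foulkes-modules-product}) for each $\Phi_i^n\otimes\Phi_j^n$. Collecting the summands indexed by $k$ gives multiplicity $\sum_j c^n_{i,j,k}\binom{r-1+j}{n}$ for $\Phi_k^n$, which is the first isomorphism. For safety I would phrase this step at the level of characters: $\phi_i^n\chi_r^n=\sum_j\binom{r-1+j}{n}\phi_i^n\phi_j^n$ by (\ref{Equ:chi=Foulkes}), then expand using (\ref{Equ:Foulkes-product-decom}). Since characters determine $\mathbb{C}\mathfrak{S}_n$-modules up to isomorphism, the module statement follows. The only point needing care is that the multiplicities be non-negative integers so that the direct sum makes sense. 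This holds because they are multiplicities in the genuine modules $\Phi_i^n\otimes\Phi_j^n$, or equivalently because $\phi_1^n,\dots,\phi_n^n$ are linearly independent and the left side is a genuine module.

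Third, for the dimension identity I take dimensions of both sides. On the left I get $E_n(i-1)\cdot r^n$, using (\ref{Equ:Degree=Euler}) and $\dim(\mathbb{C}^r)^{\otimes n}=r^n$, or equivalently $\chi_r^n(\boldsymbol e)=r^{\ell_n(\boldsymbol e)}=r^n$. On the right I get $\sum_k\sum_j c^n_{i,j,k}\binom{r-1+j}{n}E_n(k-1)$, again by (\ref{Equ:Degree=Euler}). I do not expect a real obstacle; the only subtle point is the non-negativity and integrality of the $c^n_{i,j,k}$ noted above.
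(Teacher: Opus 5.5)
Your proposal is the paper's own argument: the paper gets the isomorphism by combining (\ref{Equ:Rep=chi=Foulkes}) with (\ref{Equ:Decom-Foulkes-modules-product}), and the dimension identity by evaluating at $\boldsymbol{e}$ using (\ref{Equ:Degree=Euler}) and $\chi_r^n(\boldsymbol{e})=r^n$. Drop your ``equivalently'' justification of non-negative integrality: linear independence of $\phi_1^n,\ldots,\phi_n^n$ only makes the coefficients unique, and because the $\Phi_k^n$ are reducible, a genuine character in their span need not have integral coefficients (for $n=3$, $\chi^{(2,1)}=\tfrac12\phi_2^3$), so this property must be taken from (\ref{Equ:Decom-Foulkes-modules-product}) itself, as the paper does.
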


Let $r,n$ be positive integers. In the remainder part of this section,
we introduce some combinatorics for $c(r-1,n+1,ir-k)$ for $1\leq i,k\leq n$ via generating functions.
It follows from the definition that $c(r-1,n+1,ir-k)$ is the number of integer solutions of equation
\begin{equation*}
  x_1+x_2+\cdots+x_{n+1}=ir-k,
\end{equation*}
where $0\leq x_1,\ldots, x_{n+1}\leq r-1$, which is the
coefficient of $x^{ir-k}$ in $(1+x+x^2+\cdots+x^{r-1})^{n+1}$. Since
\begin{equation*}
 (1+x+x^2+\cdots+x^{r-1})^{n+1}=(1-x^{r})^{n+1}(1-x)^{-n-1}
\end{equation*}
and
\begin{equation*}
 (1-x^{r})^{n+1}=\sum_{a=0}^{n+1}\tbinom{n+1}{a}(-x^{r})^{a}, \quad (1-x)^{-(n+1)}=\sum_{a=0}^{\infty}\tbinom{n+a}{n}x^{a},
\end{equation*}
we yield
\begin{equation*}
 c(r-1,n+1,ir-k)=\sum_{a=0}^{i-[k/r]}(-1)^a\tbinom{n+1}{a}\tbinom{n+(i-a)r-k}{n}.
\end{equation*}
Notice that $\tbinom{m}{n}=0$ whenever $m<n$, the above equality can be rewritten as follows
\begin{equation}\label{Equ:c(r-1,n+1,ir-k)}
 c(r-1,n+1,ir-k)=\sum_{a=0}^{i-1}(-1)^a\tbinom{n+1}{a}\tbinom{(i-a)r+n-k}{n}.
\end{equation}

Comparing Corollary~\ref{Cor:iso-modules} with Theorem~\ref{Them:Main}, it is natural to expect that, for $1\leq i,k\leq n$, we have
\begin{eqnarray*}
  c(r-1,n+1,ir-k)&=&\sum_{j=1}^{n}c_{i,j,k}^{n}\tbinom{r-1+j}{n}\\
  &=&\sum_{j=1}^{n}\sum_{a=1}^i\sum_{b=1}^j  (-1)^{i+j-a-b}\tbinom{n+1}{i-a}\tbinom{n+1}{j-b}\tbinom{n-k+ab}{n}\tbinom{r-1+j}{n}.
\end{eqnarray*}
Unfortunately, the follow examples show that it is not true, which motivate us to expect that  the following equality holds (see Lemma~\ref{Lemm:c=r-c}):  for $i,k=1,\ldots,n$, we have \begin{eqnarray*}
  c(r-1,n+1,ir-k)&=&\sum_{j=1}^{n}c_{i,j,n+1-k}^{n}\tbinom{r-1+j}{n}\\
  &=&\sum_{j=1}^{n}\sum_{a=1}^i\sum_{b=1}^j
  (-1)^{i+j-a-b}\tbinom{r-1+j}{n}
  \tbinom{n+1}{i-a}\tbinom{n+1}{j-b}\tbinom{ab+k-1}{n}.
 \end{eqnarray*}

\begin{example}\label{Exam:n=2}Let $n=2$ and $r$ be a positive integer. Then
 \begin{eqnarray*}
&&c(r-1,3,r-1)=\sum_{j=1}^{2}\tbinom{r-1+j}{2}c_{1,j,2}^{2}=\tbinom{r+1}{2}=
\sum_{j=1}^{2}\tbinom{r-1+j}{2}c_{2,j,1}^{2}=c(r-1,3,2r-2);\\
&&c(r-1,3,r-2)=\sum_{j=1}^{2}c_{1,j,1}^{2}\tbinom{r-1+j}{2}=\tbinom{r}{2}=
\sum_{j=1}^{2}\tbinom{r-1+j}{2}c_{2,j,2}^{2}=c(r-1,3,2r-1).
\end{eqnarray*}
\end{example}

\begin{example}\label{Exam:n=3}Let $n=3$ and $r$ be a positive integer. Then
 \begin{eqnarray*}
c(r-1,4,r-1)&=&\sum_{j=1}^{3}c_{1,j,3}^{3}\tbinom{r-1+j}{3}=\tbinom{r+2}{3};\\
c(r-1,4,r-2)&=&\sum_{j=1}^{3}c_{1,j,2}^{3}\tbinom{r-1+j}{3}=\tbinom{r+1}{3};\\
c(r-1,4,r-3)&=&\sum_{j=1}^{3}c_{1,j,1}^{3}\tbinom{r-1+j}{3}=\tbinom{r}{3};\\
c(r-1,4,2r-1)&=&\sum_{j=1}^{3}c_{2,j,3}^{3}\tbinom{r-1+j}{3}=4\tbinom{r+1}{3};\\
c(r-1,4,2r-2)&=&\sum_{j=1}^{3}c_{2,j,2}^{3}\tbinom{r-1+j}{3}=4\tbinom{r+1}{3}+r;\\
c(r-1,4,2r-3)&=&\sum_{j=1}^{3}c_{2,j,1}^{3}\tbinom{r-1+j}{3}=4\tbinom{r+1}{3};\\
c(r-1,4,3r-1)&=&\sum_{j=1}^{3}c_{3,j,3}^{3}\tbinom{r-1+j}{3}=\tbinom{r}{3};\\
c(r-1,4,3r-2)&=&\sum_{j=1}^{3}c_{3,j,2}^{3}\tbinom{r-1+j}{3}=\tbinom{r+1}{3};\\
c(r-1,4,3r-3)&=&\sum_{j=1}^{3}c_{3,j,1}^{3}\tbinom{r-1+j}{3}=\tbinom{r+2}{3}.
\end{eqnarray*}
\end{example}

\section{Proof of Proposition~\ref{Prop:n+ar-k}}\label{Sec:Combinatorial-identity}

This section is devoted to proving the combinatorial identity in Proposition~\ref{Prop:n+ar-k} by induction arguments.

Let us begin with the $(n+1)$-th finite difference of polynomials. Suppose that $p(x)$ is a polynomial.
Then the $(n+1)$-th finite difference of $p(x)$ is defined recursively by
\begin{equation*}
 \vartriangle^{n+1}\!p(x)=\vartriangle(\vartriangle^{n} p(x)),
\end{equation*}
where  $\vartriangle\!p(x):=p(x+1)-p(x)$ is the first difference of $p(x)$. It is well-known that
\begin{equation}\label{Equ:n-difference}
  \vartriangle^{n+1}\!p(x)=\sum_{k=0}^{n+1}(-1)^{k}\tbinom{n+1}{k}p(x+n+1-k)
\end{equation}
and $\vartriangle^{n+1}\!p(x)=0$ whenever $\mathrm{deg\,}p(x)<n+1$.

For $\ell, s=0,1, \ldots, n-1$, we let $p(x)=x^{n-s}$.
Then Equ.~(\ref{Equ:n-difference}) implies that
\begin{equation*}
 \sum_{k=0}^{n+1}(-1)^{k}\tbinom{n+1}{k}(n+1+x-k)^{n-s}=0.
\end{equation*}
In particular, letting $x=\ell-n$, we yield that
\begin{equation}\label{Equ:n+1-diff=zero}
 \sum_{k=0}^{n+1}(-1)^{k}\tbinom{n+1}{k}(\ell+1-k)^{n-s}=0
\end{equation}
for $\ell, s=0,1,\ldots, n-1$.

\begin{lemma}\label{Lemm:s=n-s}Let $n$ be a positive integer. Then
\begin{equation*}
  \sum_{i=0}^{\ell}(-1)^{i}\tbinom{n+1}{i}(\ell+1-i)^{n-s}=
  (-1)^s\sum_{b=1}^{n-\ell}(-1)^{n-\ell-b}\tbinom{n+1}{n-\ell-b}b^{n-s}
\end{equation*}
for $\ell, s=0,\ldots,n-1$. In particular,
\begin{equation*}
  \sum_{b=1}^{n}(-1)^{n-b}\tbinom{n+1}{n-b}b^{n-s}=(-1)^s
\end{equation*}
for $s=0,\ldots,n-1$.
\end{lemma}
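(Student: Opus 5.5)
The plan is to split the vanishing sum \eqref{Equ:n+1-diff=zero} at the index $\ell$ and reindex the tail. By \eqref{Equ:n+1-diff=zero}, for $\ell,s\in\{0,\ldots,n-1\}$,
\begin{equation*}
\sum_{i=0}^{\ell}(-1)^{i}\tbinom{n+1}{i}(\ell+1-i)^{n-s}=-\sum_{i=\ell+1}^{n+1}(-1)^{i}\tbinom{n+1}{i}(\ell+1-i)^{n-s}.
\end{equation*}
The term $i=\ell+1$ on the right vanishes, because $n-s\geq 1$ and so $0^{n-s}=0$. For the remaining indices $i=\ell+2,\ldots,n+1$ I will write $i=\ell+1+b$ with $b=1,\ldots,n-\ell$. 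Then $(\ell+1-i)^{n-s}=(-b)^{n-s}=(-1)^{n-s}b^{n-s}$, and by symmetry of binomial coefficients $\tbinom{n+1}{\ell+1+b}=\tbinom{n+1}{n-\ell-b}$.

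Next I collect the signs. The right-hand side becomes
\begin{equation*}
-\sum_{b=1}^{n-\ell}(-1)^{\ell+1+b}(-1)^{n-s}\tbinom{n+1}{n-\ell-b}b^{n-s}.
\end{equation*}
The total sign is $-(-1)^{\ell+1+b+n-s}=(-1)^{n+\ell+b-s}$. Since $(-1)^{2b}=1$ and $(-1)^{-s}=(-1)^{s}$, this can be rewritten as $(-1)^{s}(-1)^{n-\ell-b}$. That is exactly the claimed right-hand side.

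For the particular case I take $\ell=0$. The left-hand side then reduces to the single term $i=0$, namely $\tbinom{n+1}{0}\cdot 1^{n-s}=1$. The identity gives $1=(-1)^s\sum_{b=1}^{n}(-1)^{n-b}\tbinom{n+1}{n-b}b^{n-s}$, and multiplying both sides by $(-1)^s$ yields the stated formula.

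There is no real obstacle here. The only points needing care are the sign bookkeeping and checking that the boundary term $i=\ell+1$ vanishes. That vanishing uses $s\leq n-1$, so that the exponent $n-s$ is at least $1$. The same restriction $s\le n-1$ is what makes \eqref{Equ:n+1-diff=zero} applicable, since it ensures $x^{n-s}$ has degree less than $n+1$.
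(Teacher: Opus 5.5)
Your proof is correct and is essentially the paper's argument: split the vanishing sum \eqref{Equ:n+1-diff=zero} at $i=\ell$, reindex the tail, use $\tbinom{n+1}{\ell+1+b}=\tbinom{n+1}{n-\ell-b}$, and drop the boundary term because $0^{n-s}=0$ when $s\le n-1$. The paper reindexes in two stages, meeting the right-hand side at an intermediate sum over $k=n-\ell-b$, whereas you substitute $b=i-\ell-1$ directly; you also derive the special case from $\ell=0$ explicitly, which the paper leaves implicit.
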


\begin{proof}
Thanks to Equ.~(\ref{Equ:n+1-diff=zero}), we have
\begin{eqnarray*}
 \sum_{i=0}^{\ell}(-1)^{i}\tbinom{n+1}{i}(\ell+1-i)^{n-s}
 &=&-\sum_{i=\ell+1}^{n+1}(-1)^{i}\tbinom{n+1}{i}(\ell+1-i)^{n-s}\\
 &=&(-1)^{n+\ell-s}\sum_{j=0}^{n-\ell}(-1)^j\tbinom{n+1}{n-\ell-j}j^{n-s}\\
 &=&(-1)^{s}\sum_{k=0}^{n-\ell}(-1)^k\tbinom{n+1}{k}(n-\ell-k)^{n-s},
\end{eqnarray*}
where the second and the last equalities follows by substituting $j=i-(\ell+1)$ and $k=n-\ell-j$ respectively.

On the other hand, by substituting $k=n-\ell-b$, we have
\begin{eqnarray*}
 (-1)^s\sum_{b=1}^{n-\ell}(-1)^{n-\ell-b}\tbinom{n+1}{n-\ell-b}b^{n-s}
 &=&(-1)^s\sum_{k=0}^{n-\ell-1}(-1)^{k}\tbinom{n+1}{k}(n-\ell-k)^{n-s}\\
 &=&(-1)^s\sum_{k=0}^{n-\ell}(-1)^{k}\tbinom{n+1}{k}(n-\ell-k)^{n-s}.
\end{eqnarray*}
Combining the above two identities, we complete the proof of the lemma.
\end{proof}

\begin{lemma}\label{Lemm:r=1}Let $a,n$ be positive integers. Then
\begin{equation*}
\sum_{b=1}^n (-1)^{n-b}\tbinom{n+1}{n-b}\tbinom{ab+k-1}{n}=\tbinom{a+n-k}{n}
\end{equation*}
 for $k=1,\ldots,n$.
\end{lemma}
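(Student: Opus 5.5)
Both sides are polynomials in $a$ of degree at most $n$, so the plan is to expand them in powers of $a$ and compare coefficients using Lemma~\ref{Lemm:s=n-s}. The difficulty is that $\tbinom{ab+k-1}{n}$ is the polynomial $\frac{1}{n!}(ab+k-1)(ab+k-2)\cdots(ab+k-n)$ only when $ab+k-1\ge 0$. For $b\ge 1$, $a\ge1$, $k\ge1$ we have $ab+k-1\geq 0$, and the product vanishes exactly when $0\le ab+k-1\le n-1$, matching the convention $\tbinom{m}{n}=0$ for $m<n$. So every term on the left is genuinely polynomial. The right side $\tbinom{a+n-k}{n}$ is also the polynomial $\frac{1}{n!}(a+n-k)\cdots(a+1-k)$, since $a+n-k\ge 0$ holds as $a\geq 1$ and $k\leq n$.

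Write $\tbinom{ab+k-1}{n}=\frac1{n!}\prod_{t=1}^{n}(ab+k-t)=\frac{1}{n!}\sum_{s=0}^{n} e_s\,(ab)^{n-s}$, where $e_s$ is the $s$-th elementary symmetric function of $k-1,k-2,\ldots,k-n$. Similarly, $\tbinom{a+n-k}{n}=\frac1{n!}\prod_{t=1}^n(a+n-k+1-t)$, and after substituting $t\mapsto n+1-t$ this becomes $\frac{1}{n!}\prod_{t=1}^n(a-(k-t))$, whose expansion is $\frac1{n!}\sum_s(-1)^s e_s\,a^{n-s}$.

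On the left we use the "in particular" part of Lemma~\ref{Lemm:s=n-s}: for $s=0,\ldots,n-1$, $\sum_{b=1}^n(-1)^{n-b}\tbinom{n+1}{n-b}b^{n-s}=(-1)^s$. The left side then becomes $\frac1{n!}\sum_{s=0}^{n-1}e_s(-1)^s a^{n-s}$ plus the $s=n$ term $\frac1{n!}e_n\sum_{b=1}^n(-1)^{n-b}\tbinom{n+1}{n-b}$.

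The main obstacle is this constant term. Here $e_n=\prod_{t=1}^n(k-t)=0$ because $1\le k\le n$, so the term vanishes on the left. On the right the $s=n$ term is $(-1)^n e_n$, which is also $0$. Both sides therefore equal $\frac1{n!}\sum_{s}(-1)^se_sa^{n-s}$, which proves the lemma.

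If one prefers not to rely on $e_n=0$, the alternative is to evaluate $\sum_{b=1}^{n}(-1)^{n-b}\tbinom{n+1}{n-b}=\sum_{j=0}^{n-1}(-1)^j\tbinom{n+1}{j}=(-1)^{n}\big(1-(n+1)\big)\cdot(-1)$ directly, but that is unnecessary.
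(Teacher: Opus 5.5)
Your proof is correct and is essentially the paper's argument: you expand both sides in powers of $a$ using elementary symmetric functions of $k-1,\ldots,k-n$ (the paper uses their negatives $1-k,\ldots,n-k$), match the coefficients of $a^{n-s}$ for $s\le n-1$ via the ``in particular'' case of Lemma~\ref{Lemm:s=n-s}, and dispose of the constant term with $e_n=0$. Your explicit handling of polynomiality and of the constant term is more careful than the paper's one-line assertion that the constant term vanishes, but you should drop the final aside: in fact $\sum_{b=1}^{n}(-1)^{n-b}\tbinom{n+1}{n-b}=(-1)^{n+1}n$, not $(-1)^{n}n$, and since this differs from $(-1)^n$, the vanishing $e_n=0$ (i.e.\ the hypothesis $1\le k\le n$) is genuinely needed rather than optional.
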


\begin{proof}
For a fixed integer $k$ with $1\leq k\leq n$, we set
\begin{equation*}
 f_n(a):=\sum_{b=1}^n (-1)^{n-b}\tbinom{n+1}{n-b}\tbinom{ab+k-1}{n}-\tbinom{a+n-k}{n}.
\end{equation*}
Then it is easy to see that $f_n(a)$ is a polynomial of degree $n$ in variable $a$ with constant term zero.
Let $e_s$ be the $s$-th elementary symmetric polynomial of $1-k, 2-k, \ldots, n-k$ for $s=0, 1,\ldots, n-1$ with convention $e_0=1$.
If we write $f_n(a)=\sum_{s=0}^{n-1}c_sa^{n-s}$, then Lemma~\ref{Lemm:s=n-s} shows that
\begin{equation*}
 c_s=\left((-1)^s\sum_{b=1}^n(-1)^{n-b}\tbinom{n+1}{n-b}b^{n-s}-1\right)e_s=0
\end{equation*}
for $s=0, 1,\ldots,n-1$. Thus the lemma is proved.\end{proof}

Now we can prove Proposition~\ref{Prop:n+ar-k} by induction arguments.

\begin{proof}[Proof of Proposition~\ref{Prop:n+ar-k}]
For an arbitrary fixed positive integer $a$ and $k$ with $1\leq k\leq n$,
we set
\begin{equation*}G(r;a,n):=\tbinom{ar+n-k}{n}-\sum_{j=1}^{n}\tbinom{r-1+j}{n}\sum_{b=1}^j
  (-1)^{j-b}\tbinom{n+1}{j-b}\tbinom{ab+k-1}{n}.
\end{equation*}
Then $G(r;a,n)$ is a polynomial in variable $r$ of degree $n$. Clearly $G(0;a,n)=0$.
Thus it suffices to show that $G(r;a,n)=0$ for $r=1, 2,\ldots,n$. We show that  $G(r;a,n)=0$  by applying the induction argument on $r$.
For $r=1$, Lemma~\ref{Lemm:r=1} shows that $G(1;a,n)=0$. Now assume that $G(r;a,n)=0$ for $r=1,\ldots, \ell<n$.
We need to show that $G(\ell+1;a,n)=0$.

For $j=1,\ldots, n$, we set
\begin{equation*}
  g(j;a,n)=\sum_{b=1}^j
  (-1)^{j-b}\tbinom{n+1}{j-b}\tbinom{ab+k-1}{n}.
\end{equation*}
By the induction hypotheses, we have $G(t;a,n)=0$ for $t=1,\ldots,\ell$, in other words,
\begin{eqnarray*}
 &&\displaystyle\sum_{i=1}^{t}\tbinom{n+1}{t-i}g(n+1-i;a,n)=\tbinom{ta+n-k}{n}
\end{eqnarray*}
for $t=1,\ldots,\ell$. As a consequence, we yield that
\begin{equation}\label{Equ:g(j;a,n)}
 g(n+1-t;a,n)=\sum_{i=0}^{t-1}\tbinom{n+1}{i}\tbinom{(t-i)a+n-k}{n}
\end{equation}
for $t=1,\ldots,\ell$.

Now by applying Equ.~(\ref{Equ:g(j;a,n)}) repeatedly, we yield that
\begin{eqnarray}\label{Equ:G-ell+1}
&&G(\ell+1;a,n)=\sum_{i=0}^{\ell}(-1)^{i}\tbinom{n+1}{i}\tbinom{a(\ell+1-i)+n-k}{n}-g(n-\ell;a,n).
\end{eqnarray}
Notice that the constant term of $G(\ell+1;a,n)$ is zero, and thus we can write
\begin{eqnarray*}G(\ell+1;a,n)
&=&\sum_{s=1}^{n}c_{\ell,s}a^{n-s}.
\end{eqnarray*}
Then, for $s=0,1,\ldots, n-1$, Equ.~(\ref{Equ:G-ell+1}) shows
\begin{eqnarray*}
 c_{\ell,s}=\left(\sum_{i=0}^{\ell+1}(-1)^i\tbinom{n+1}{i}(\ell+1-i)^{n-s}-
 (-1)^s\sum_{b=1}^{n-\ell}(-1)^{n-\ell-b}\tbinom{n+1}{n-\ell-b}b^{n-s}\right)e_s,
\end{eqnarray*}
where $e_s$ is the $s$-th elementary symmetric polynomial of $1-k, 2-k, \ldots,n-k$ with $e_0=1$.
Applying Lemma~\ref{Lemm:s=n-s}, we can see that $c_{\ell,s}=0$ for $s=0,1,\ldots, n-1$,
that is, $G(\ell+1;a,n)=0$. This completes the proof of the proposition.
\end{proof}

In \cite[Page~87]{GGK}, Gnedin, Gorin and Kerov remarked that the Foulkes characters $\phi_i^n$ ($i=1,\ldots,n$) can be obtained by the iterated difference of the sequence $\chi_{\centerdot}^n$ (with $\chi_i^n=0$ for $i\leq 0$).  Specifically, let $\triangledown$ be the forward difference operators acting as $\triangledown(x_{\centerdot})_i=x_i-x_{i-1}$. Then $\phi_i^n=\triangledown^n(\chi_{\centerdot}^n)_i$ for $i=1,\ldots,n$. It may be interesting to clarify the relationship between $\phi_i^n=\triangledown^n(\chi_{\centerdot}^n)_i$ and Proposition~\ref{Prop:n+ar-k}.


The following fact follows directly by letting $r=1$ and $k=1$ in Proposition~\ref{Prop:n+ar-k}, which is a generalization of a result proved in \cite[Page~147]{Holte}.
\begin{corollary}Let $a, n$ be positive integers. Then
\begin{equation*}
  \sum_{b=1}^n(-1)^{n-b}\tbinom{n+1}{n-b}\tbinom{ab}{n}=\tbinom{n+a-1}{n}.
\end{equation*}
In particular,
\begin{equation*}
  \sum_{b=1}^j(-1)^{b-1}\tbinom{n+1}{b-1}\tbinom{n+1-i-j-b}{n}=\delta_{ij}
\end{equation*}
for $i,j=1,\ldots, n$.
\end{corollary}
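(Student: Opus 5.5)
The first identity is the special case $r=1$, $k=1$ of Proposition~\ref{Prop:n+ar-k}, and I would prove it that way. When $r=1$ the outer factor $\tbinom{r-1+j}{n}=\tbinom{j}{n}$ vanishes for $j<n$ and equals $1$ for $j=n$. So only the $j=n$ term survives, and the left-hand side becomes $\sum_{b=1}^n(-1)^{n-b}\tbinom{n+1}{n-b}\tbinom{ab}{n}$. The right-hand side is $\tbinom{a+n-1}{n}$. This is also exactly Lemma~\ref{Lemm:r=1} with $k=1$, so nothing further is needed.

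For the ``in particular'' statement, the binomials $\tbinom{m}{n}$ with $m<n$ cannot be read with the convention $\tbinom{m}{n}=0$. With that convention every summand vanishes, since $n+1-i-j-b\le n-2$, and the sum could never equal $\delta_{ij}$. So I would read $\tbinom{x}{n}=x(x-1)\cdots(x-n+1)/n!$ as a polynomial in $x$. The key observation is then that both sides of Lemma~\ref{Lemm:r=1} are polynomials in $a$ of degree at most $n$. Since they agree for all positive integers $a$, they agree identically, in particular at negative and zero values of $a$.

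The plan is to evaluate this polynomial identity at a suitable value, such as $a=-1$ (or at $a=0$ combined with a shift in $k$). Then $\tbinom{ab+k-1}{n}$ turns into a binomial with negative upper index, and reflection $\tbinom{-m}{n}=(-1)^n\tbinom{m+n-1}{n}$ rewrites it. Comparing with the truncated sums $g(j;a,n)$ from the proof of Proposition~\ref{Prop:n+ar-k} should produce a triangular system indexed by $i,j$.

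Its entries are coefficients of $(1-x)^{n+1}\cdot(1-x)^{-n-1}=1$, which gives $\delta_{ij}$. Equivalently, the identity says that the matrix $\bigl(\tbinom{r-1+j}{n}\bigr)_{r,j}$ of Equ.~(\ref{Equ:chi=Foulkes}) is inverted by the $(n+1)$-st difference operator, in line with the remark of Gnedin--Gorin--Kerov that $\phi_i^n=\triangledown^n(\chi_{\centerdot}^n)_i$.

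The main obstacle is pinning down the exact indexing so that the evaluated sum matches the displayed one, because the printed indices may contain a typo. I would first verify the identity numerically for $n=1,2$. For example, with $n=1$ and $i=j=1$ the literal polynomial reading gives $\tbinom{-2}{1}=-2\neq1$. That check would tell me which shift of $i$, $j$, $b$ is intended before carrying out the reflection argument.
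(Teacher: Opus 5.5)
Your proof of the first identity is exactly the paper's. The paper's whole proof is the specialization $r=k=1$ of Proposition~\ref{Prop:n+ar-k}: only $j=n$ survives because $\tbinom{j}{n}=0$ for $j<n$, which is the same as Lemma~\ref{Lemm:r=1} at $k=1$.

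The paper gives no argument for the ``in particular'' display, and you are right that it is false as printed. Under the paper's convention every term vanishes, since $n+1-i-j-b\le n-2$. Under the polynomial reading it already fails for $n=1$, $i=j=1$. One slip: the upper index there is $-1$, so the value is $\tbinom{-1}{1}=-1$, not $\tbinom{-2}{1}=-2$; your conclusion is unaffected.

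What your proposal still lacks is a definite corrected statement and an argument carried out for it; the comparison with $g(j;a,n)$ is not needed. The most plausible intended identity changes only the sign of $j$:
\[
\sum_{b=1}^{j}(-1)^{b-1}\tbinom{n+1}{b-1}\tbinom{n+1-i+j-b}{n}=\delta_{ij},\qquad 1\le i,j\le n .
\]
This is the entrywise form of the fact that the two matrices displayed at the end of the paper are mutually inverse up to reversing the order of the $\phi_i^n$. Those matrices are the one with entries $\tbinom{r-1+j}{n}$ and the lower triangular one with entries $(-1)^{t-s}\tbinom{n+1}{t-s}$.

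Every upper index in this sum is at least $n+1-i\ge 1$, so the convention issue disappears. Put $c=b-1$ and $m=j-i$.
\begin{itemize}
\item If $m<0$, every upper index lies in $[1,n-1]$, so the sum is $0$.
\item If $m\ge 0$, the terms with $c>m$ vanish for the same reason. What remains is $\sum_{c=0}^{m}(-1)^c\tbinom{n+1}{c}\tbinom{n+m-c}{n}$, the coefficient of $x^m$ in $(1-x)^{n+1}(1-x)^{-n-1}=1$, i.e.\ $\delta_{m0}$.
\end{itemize}

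If you want to derive this from the paper as you planned, note that evaluating the $k=1$ identity at $a=-1$ gives only the single case $m=n-1$. You need Lemma~\ref{Lemm:r=1} with $k=n-m$ for every $m$. Evaluating at $a=-1$ is legitimate: both sides are polynomials in $a$ that agree for all $a\ge 1$, where every upper index is nonnegative, so the convention matches the polynomial $\tbinom{x}{n}$. At $a=-1$:
\begin{itemize}
\item $\tbinom{k-1-b}{n}$ equals $0$ for $b<k$ and $(-1)^n\tbinom{n+b-k}{n}$ for $b\ge k$;
\item the right side becomes $\tbinom{n-1-k}{n}=(-1)^n\delta_{kn}$.
\end{itemize}
Substituting $c=n-b$ then gives exactly $\sum_{c=0}^{m}(-1)^c\tbinom{n+1}{c}\tbinom{n+m-c}{n}=\delta_{m0}$.
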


\section{Proof of Theorem~\ref{Them:Main} and some applications}\label{Sec:Applications}

This section devotes to prove the main result Theorem~\ref{Them:Main} and the Brenti--Welker identity
Theorem~\ref{Them:BW-identity}. Furthermore, as a byproduct, we
show a property of the decomposition numbers of the products of Foukles characters.

The following crucial fact is inspired by Examples~\ref{Exam:n=2} and \ref{Exam:n=3}, which
provides the right way to the Brenti--Welker identity, i.e., to prove Theorem~\ref{Them:Main} by Proposition~\ref{Prop:n+ar-k}.

\begin{lemma}\label{Lemm:c=r-c}
Let $r,n$ be positive integers. Then
\begin{eqnarray*}
  c(r-1,n+1,ir-k)&=&\sum_{j=1}^{n}c_{i,j,n+1-k}^{n}\tbinom{r-1+j}{n}
 \end{eqnarray*}
  for $i,k=1,\ldots,n$, that is,
 \begin{eqnarray*}
  c(r-1,n+1,ir-k)&=&\sum_{j=1}^{n}\sum_{a=1}^i\sum_{b=1}^j
  (-1)^{i+j-a-b}\tbinom{r-1+j}{n}
  \tbinom{n+1}{i-a}\tbinom{n+1}{j-b}\tbinom{ab+k-1}{n}
\end{eqnarray*}
for $i,k=1,\ldots,n$.
\end{lemma}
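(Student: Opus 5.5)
The plan is to reduce the lemma to Proposition~\ref{Prop:n+ar-k} by exchanging the order of summation, and then to compare with the closed formula \eqref{Equ:c(r-1,n+1,ir-k)}.

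First, the two displayed forms of the statement agree. Substituting $k\mapsto n+1-k$ in Theorem~\ref{Them:decom-numbers} turns $\tbinom{n+ab-k}{n}$ into $\tbinom{ab+k-1}{n}$, so it suffices to prove the triple-sum identity.

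Next, move the sum over $a$ outside and factor out the terms that do not depend on $j$ or $b$:
\begin{equation*}
\sum_{a=1}^i(-1)^{i-a}\tbinom{n+1}{i-a}\left[\sum_{j=1}^{n}\tbinom{r-1+j}{n}\sum_{b=1}^j(-1)^{j-b}\tbinom{n+1}{j-b}\tbinom{ab+k-1}{n}\right].
\end{equation*}
For each fixed positive integer $a$ and each $1\le k\le n$, the bracket is precisely the left-hand side of Proposition~\ref{Prop:n+ar-k}. It therefore equals $\tbinom{ar+n-k}{n}$, and the whole expression becomes
\begin{equation*}
\sum_{a=1}^i(-1)^{i-a}\tbinom{n+1}{i-a}\tbinom{ar+n-k}{n}.
\end{equation*}

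Finally, reindex with $a'=i-a$, so that $a'$ runs over $0,\ldots,i-1$. The sum becomes
\begin{equation*}
\sum_{a'=0}^{i-1}(-1)^{a'}\tbinom{n+1}{a'}\tbinom{(i-a')r+n-k}{n},
\end{equation*}
which is exactly \eqref{Equ:c(r-1,n+1,ir-k)}. That closes the argument.

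All the real difficulty sits inside Proposition~\ref{Prop:n+ar-k}, which we take as given. The only thing to check here is the bookkeeping: the sign $(-1)^{i+j-a-b}$ must factor as $(-1)^{i-a}(-1)^{j-b}$, and the range $1\le k\le n$ must match the hypothesis of the proposition. Both hold, so I expect no genuine obstacle.
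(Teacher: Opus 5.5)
Your proposal is correct and is essentially the paper's own proof. The paper likewise expands $c^n_{i,j,n+1-k}$ via Theorem~\ref{Them:decom-numbers}, pulls the $a$-sum outside, and applies Proposition~\ref{Prop:n+ar-k} to the inner double sum for each $a$. It then reindexes with $t=i-a$ to obtain exactly \eqref{Equ:c(r-1,n+1,ir-k)}.
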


\begin{proof}It follows from Theorem~\ref{Them:decom-numbers} that
\begin{eqnarray*}
  \sum_{j=1}^{n}c_{i,j,n+1-k}^{n}\tbinom{r-1+j}{n}&=&
  \sum_{j=1}^{n}\sum_{a=1}^i\sum_{b=1}^j(-1)^{i+j-a-b}\tbinom{r-1+j}{n}  \tbinom{n+1}{i-a}\tbinom{n+1}{j-b}\tbinom{ab+k-1}{n}\\
  &=&\sum_{a=1}^i(-1)^{i-a}\tbinom{n+1}{i-a}
  \sum_{j=1}^{n}\sum_{b=1}^j(-1)^{j-b}
  \tbinom{r-1+j}{n}\tbinom{n+1}{j-b}\tbinom{ab+k-1}{n}\\
  &=&\sum_{a=1}^i(-1)^{i-a}\tbinom{n+1}{i-a}\tbinom{n+ar-k}{n}\quad (\text{by Proposition~\ref{Prop:n+ar-k}})\\
  &=&\sum_{t=0}^{i-1}(-1)^t\tbinom{n+1}{t}\tbinom{n+(i-t)r-k}{n}\\
  &=&c(r-1,n+1,ir-k)\quad (\text{by Equ.~(\ref{Equ:c(r-1,n+1,ir-k)})}),
 \end{eqnarray*}
which completes the proof of the lemma.
\end{proof}

\begin{remark}
Lemma~\ref{Lemm:c=r-c} may enable us to give an alternative closed formula for the decomposition numbers $c_{i,j,k}^n$
of products of Foulkes characters in terms of $c(r-1,n+1,ir-k)$. More precisely, for $i,k=1,2,\ldots,n$, Lemma~\ref{Lemm:c=r-c} shows
\begin{equation*}
  \left(\begin{array}{cccc}&&&\tbinom{n}{n}\\
  &&\tbinom{n}{n}&\tbinom{n+1}{n}\\
  &{\mathinner{\mkern2mu\raise1pt\hbox{.}\mkern2mu
\raise4pt\hbox{.}\mkern2mu\raise7pt\hbox{.}\mkern1mu}}&\vdots&\vdots\\
  \tbinom{n}{n}&\cdots&\tbinom{2n-2}{n}&\tbinom{2n-1}{n}\\
  \end{array}\right)
  \left(\begin{array}{c}
  c^n_{i,1,n+1-k}\\c^n_{i,2,n+1-k}\\\vdots\\c^n_{i,n,n+1-k}
  \end{array}
  \right)=\left(\begin{array}{c}
  c(0,n+1,i-k)\\c(1,n+1,2i-k)\\\vdots\\c(n-1,n+1,ni-k)
  \end{array}
  \right),
\end{equation*}
which gives a closed formula for $c_{i,j,k}^n$ in terms of $c(r-1,n+1,ir-k)$.
\end{remark}

Now we are ready to prove Theorem~\ref{Them:Main}.

\begin{proof}[Proof of Theorem~\ref{Them:Main}]For $i=1,\ldots, n$ and $r\geq 1$, Equ.~(\ref{Equ:chi=Foulkes}) shows that
\begin{eqnarray*}
\Phi_i^n\otimes_{\mathbb{C}}\left(\mathbb{C}^r\right)^{\otimes n}&=&\sum_{j=1}^n\tbinom{r-1+j}{n}\Phi_i^n\otimes_{\mathbb{C}}\Phi_j^n\\
&=&\sum_{k=1}^n\sum_{j=1}^n\tbinom{r-1+j}{n}c_{i,j,k}^n\Phi_k^n\\
&=&\sum_{k=1}^nc(r-1,n+1, ir-(n+1-k))\Phi_k^n\\
&=&\sum_{k=1}^nc(r-1,n+1, ir-k)\Phi_{n+1-k}^n.
\end{eqnarray*}
Notice that $\Phi_1^n$ and $\Phi_n^n$ are the sign representation $\mathrm{sign}_n$ and the trivial representation of
$\mathbb{C}\mathfrak{S}_n$ respectively. Thus we have
\begin{eqnarray*}
&&\left(\mathbb{C}^r\right)^{\otimes n}\cong
\bigoplus_{k=1}^nc(r-1,n+1,nr-k)\Phi_{n+1-k}^n,\\
&&\mathrm{sign}_n\otimes_{\mathbb{C}}\left(\mathbb{C}^r\right)^{\otimes n}\cong
\bigoplus_{k=1}^nc(r-1,n+1,r-k)\Phi_{n+1-k}^n.
\end{eqnarray*}
We complete the proof.
\end{proof}

We now prove the Brenti--Welker identity (see Theorem~\ref{Them:BW-identity}).

\begin{proof}[Proof of Theorem~\ref{Them:BW-identity}]Notice that $\phi_i^n(\boldsymbol{e})=E_n(i-1)$ and $\chi_r^n(\boldsymbol{e})=r^n$.
In particular, $\phi_n^n(\boldsymbol{e})=1$. By Theorem~\ref{Them:Main}, we have
\begin{eqnarray*}
r^nE_n(i-1)
&=&\mathrm{dim}_{\mathbb{C}}(\Phi_i^n\otimes_{\mathbb{C}}(\mathbb{C}^r)^{\otimes n})\\
&=&\sum_{k=1}^nc(r-1, n+1,ir-k)\phi_{n+1-k}^n(\boldsymbol{e})\\
&=&\sum_{k=1}^nc(r-1, n+1, ir-k)\phi_{k}^n(\boldsymbol{e})\\
&=&\sum_{k=1}^nc(r-1, n+1, ir-k)E_n(k-1),
\end{eqnarray*}
where the third equality follows from the palindromic structure of the sequence of Eulerian numbers
(see \cite[(4.1)]{P} for example), that is, $\phi_{k}^n(\boldsymbol{e})=\phi_{n+1-k}^n(\boldsymbol{e})$ for $k=1, \ldots, n$.
In particular, since $E_n(0)=E_n(n-1)=1$, we can get
 \begin{eqnarray*}
&& \sum_{k=1}^nc(r-1,n+1,r-k)E_n(k-1)=r^n=\sum_{k=1}^nc(r-1,n+1,nr-k)E_n(k-1).
\end{eqnarray*}
The proof of the theorem is completed.
\end{proof}

Now we present a property of the decomposition numbers $c_{i,j,k}^n$ of the products of Foukles characters. For a positive integer $i$, let $\mathfrak{S}_i$ be the symmetric group of degree $i$.
Recall that the \emph{Eulerian polynomial} $E_i(t)$ is defined as follows
\begin{equation*}
  E_i(t)=\sum_{\sigma\in \mathfrak{S}_i}t^{\mathrm{des}(\sigma)}=\sum_{k=1}^iE_i(k-1)t^{k-1}.
\end{equation*}
Let $\mathbb{Q}_n$ be the $\mathbb{Q}$-vector space of all polynomials in variable $t$ of degree less than $n$.
Then $\mathbb{Q}_n$ has two bases $\mathfrak{B}_n^1=\{t^i \mid 0\leq i<n\}$ and
$\mathfrak{B}_n^2=\{E_{i}(t)(1-t)^{n-i} \mid 1\leq i\leq n\}$.
Let $\mathbf{Z}_n=(z^n_{i,j})_{1\leq i,j\leq n}$ be the $n\times n$ matrix with entries define by the following equality of polynomials
\begin{equation}\label{Equ:Matrix}
 E_{i}(t)(1-t)^{n-i}=\sum_{k=1}^{n}z^n_{j,i}t^{j-1}.
\end{equation}
Then $\boldsymbol{Z}_n$ is the base change matrix between $\mathfrak{B}_n^1$ and $\mathfrak{B}_n^2$, which is invertible and
\begin{equation*}
  z_{i,j}^n=\sum_{d=1}^i(-1)^{i-d}\tbinom{n-j}{i-d}E_j(d-1).
\end{equation*}
Notice that there is an explicit formula for Eulerian numbers (see \cite[Corollary~1.3]{P}):
\begin{equation*}
  E_j(d-1)=\sum_{m=1}^{d}(-1)^{m-1}\tbinom{j+1}{m-1}(d+1-m)^j
\end{equation*}
for any positive integers $d,j$ with $1\leq d\leq j$. Thus
\begin{equation*}
  z_{i,j}^n=(-1)^i\sum_{d=1}^i\sum_{m=1}^d(-1)^{m}\tbinom{n-j}{i-d}\tbinom{j+1}{m}m^j.
\end{equation*}
It is easy to see that $z_{i,n}^n=E_n(i-1)$ and $z_{1,i}^n=1$ for $i=1,\ldots, n$.

Further, for any positive integer $r$, we also let $\boldsymbol{C}_{n,r}=\left(c_{i,k}^{n,r}\right)_{1\leq i,k\leq n}$
be the $n\times n$ matrix with entries
\begin{equation*}
  c_{i,k}^{n,r}=\sum_{j=1}^{n}c_{i,j,n+1-k}^{n}\tbinom{r-1+j}{n}.
\end{equation*}
Now Theorem~\ref{Them:decom-numbers} shows $\boldsymbol{C}_{n,r}$ is a center-symmetric matrix, that is, $c_{i,k}^{n,r}=c_{n+1-i,n+1-k}^{n,r}$ for $i,k=1,\ldots,n$.

\begin{example}For any positive integer $r$, we have
\begin{align*}
\boldsymbol{Z}_2=\left(\begin{array}{cc}
  1&1 \\-1&1
  \end{array}\right),\quad
 \boldsymbol{C}_{2,r}=\left(\begin{array}{cc}\vspace{0.5\jot}
  \tbinom{r+1}{2}&\tbinom{r}{2}\\ \tbinom{r}{2}&\tbinom{r+1}{2}
  \end{array}\right),\quad
\end{align*}
and $\boldsymbol{Z}_2^{-1}\boldsymbol{C}_{2,r}\boldsymbol{Z}_2=\mathrm{diag}(r,r^2)$.
\end{example}

\begin{example}For any positive integer $r$, we have
\begin{align*}\setlength{\itemsep}{4\jot}
\boldsymbol{Z}_3=\left(\begin{array}{ccc}
  1&1&1\\-2&0&4\\1&-1&1
  \end{array}\right),\quad
  \boldsymbol{C}_{3,r}=\left(\begin{array}{ccc}\vspace{0.5\jot}
  \tbinom{r+2}{3}&\tbinom{r+1}{3}&\tbinom{r}{3}\\\vspace{0.5\jot}
  4\tbinom{r+1}{3}&4\tbinom{r+1}{3}+r&4\tbinom{r+1}{3}\\
  \tbinom{r}{3}&\tbinom{r+1}{3}&\tbinom{r+2}{3}
  \end{array}\right), \quad
\end{align*}
and $\boldsymbol{Z}_3^{-1}\mathbf{C}_{3,r}\boldsymbol{Z}_3=\mathrm{diag}(r,r^2,r^3)$.
\end{example}

Combing Lemma~\ref{Lemm:c=r-c} and \cite[Lemma~2.1]{Brenti-Welker}, we obtain the following fact:
\begin{corollary}\label{Cor:C-diagonalized}For any positive integers $r, n$, we have
\begin{equation*}
\boldsymbol{Z}^{-1}_n\boldsymbol{C}_{n,r}\boldsymbol{Z}_{n}=\mathrm{diag}(r, r^2, \ldots, r^{n}).
\end{equation*}
\end{corollary}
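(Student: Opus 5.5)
By Lemma~\ref{Lemm:c=r-c}, the entries of $\boldsymbol{C}_{n,r}$ are exactly $c^{n,r}_{i,k}=c(r-1,n+1,ir-k)$. So the corollary is a statement about the matrix of these counting numbers alone, and the plan is to identify $\boldsymbol{C}_{n,r}$ with the matrix of the $r$-th Veronese operator studied by Brenti and Welker.

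\textbf{Step 1: the Veronese operator.} Let $U_r$ be the linear map on formal power series sending $\sum_{m\ge0}f_mt^m$ to $\sum_{m\ge0}f_{mr}t^m$. For $h\in\mathbb{Q}_n$ (degree $<n$), a power series of the form $h(t)/(1-t)^{n}$ is carried by $U_r$ to another series of the same form, say $U_r(h/(1-t)^n)=(\mathcal{U}_rh)/(1-t)^n$ with $\mathcal{U}_rh\in\mathbb{Q}_n$. This is what \cite[Lemma~2.1]{Brenti-Welker} records.

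\textbf{Step 2: matrix of $\mathcal{U}_r$ in the monomial basis.} Write $t^{m}/(1-t)^n=\sum_{d}\tbinom{n-1+d-m}{n-1}t^d$. Keep its coefficients in degrees $dr$, then multiply by $(1-t)^n$ via $(1-x^r)^n/(1-x)^n$-type manipulations, as in the derivation of Equ.~(\ref{Equ:c(r-1,n+1,ir-k)}). The coefficient of $t^{i-1}$ in $\mathcal{U}_r(t^{k-1})$ then comes out as the number of solutions of $x_1+\dots+x_{n}=\,ir-k$ (or a shifted version) with $0\le x_\nu\le r-1$.

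One indexing point must be settled here. Brenti--Welker use numerators of degree $\le n$ over $(1-t)^{n+1}$, which produces $c(r-1,n+1,\cdot)$. To match our $n\times n$ matrix, the plan is to work with numerators of degree $<n$ over $(1-t)^{n+1}$, i.e.\ the subspace vanishing appropriately. Its $(i,k)$ entry is then the coefficient of $t^{ir-k}$ in $(1+\dots+t^{r-1})^{n+1}$, which is $c(r-1,n+1,ir-k)=c^{n,r}_{i,k}$. This reconciliation of the dimension, the exponent $n$ versus $n+1$, and the row/column convention is the main obstacle. I would check it against $n=2$ and $n=3$ using the examples above; possibly a transpose or the center-symmetry $c^{n,r}_{i,k}=c^{n,r}_{n+1-i,n+1-k}$ has to be invoked.

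\textbf{Step 3: eigenvectors.} The basis $E_j(t)(1-t)^{n-j}$ is adapted to $U_r$, because
\[
\frac{E_j(t)(1-t)^{n-j}}{(1-t)^n}=\frac{E_j(t)}{(1-t)^{j}}=\sum_{m\ge0}(m+1)^{j-1}t^m .
\]
Up to the index shift fixed in Step 2, this is $\sum_m m^{j}t^m$ divided by $t$. Applying $U_r$ turns $m^j$ into $(mr)^j=r^jm^j$, so each such basis element is an eigenvector with eigenvalue $r^j$.

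If the shift makes the exponents come out as $(m+1)^{j-1}$, I would instead use the family $tE_j(t)/(1-t)^{j+1}=\sum m^jt^m$, which lies in the degree-$\le n$ over $(1-t)^{n+1}$ framework. This is precisely where Brenti--Welker's Lemma~2.1 should be applied rather than rederived.

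\textbf{Step 4: conclusion.} Since $\boldsymbol{Z}_n$ has as its columns the coordinates of $E_j(t)(1-t)^{n-j}$ in the monomial basis, $\boldsymbol{C}_{n,r}\boldsymbol{Z}_n=\boldsymbol{Z}_n\,\mathrm{diag}(r,\dots,r^n)$. Invertibility of $\boldsymbol{Z}_n$ then gives the corollary. I would sanity-check the final statement against the computed $n=2,3$ examples.
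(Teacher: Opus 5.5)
Your overall route is the paper's. Lemma~\ref{Lemm:c=r-c} gives $c^{n,r}_{i,k}=c(r-1,n+1,ir-k)$, and the diagonalization then comes from \cite[Lemma~2.1]{Brenti-Welker}; the paper's proof is exactly this combination, with the lemma cited rather than rederived.

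The problem is that the explicit set-up you sketch in Steps 1--3 is wrong as written, and you leave the indexing question you call the main obstacle unresolved.
\begin{itemize}
\item With denominator $(1-t)^n$ and $\deg h<n$ (Step 1), the space is $U_r$-stable. However, in the basis $1,t,\dots,t^{n-1}$ the matrix has entries $c(r-1,n,(i-1)r-(k-1))$ and eigenvalues $1,r,\dots,r^{n-1}$. For $n=2$ it is $\left(\begin{smallmatrix}1&0\\ r-1&r\end{smallmatrix}\right)$, which has eigenvalues $1,r$ rather than $r,r^2$, so it is not $\boldsymbol{C}_{2,r}$ under any transpose.
\item Numerators of degree $<n$ over $(1-t)^{n+1}$ (Step 2) do not form a $U_r$-stable space. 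They correspond to series $\sum_m f(m)t^m$ with $\deg f\le n$ and $f(-1)=0$, and $f(r\,\cdot)$ need not vanish at $-1$. Already for $n=1$, $U_r$ sends $1/(1-t)^2$ to $(1+(r-1)t)/(1-t)^2$.
\item The identity displayed in Step 3 is false. The correct identity is $E_j(t)/(1-t)^{j+1}=\sum_{m\ge0}(m+1)^jt^m$, so for instance $E_2(t)/(1-t)^2=\sum_m(2m+1)t^m$. Moreover, series $\sum_m(m+1)^jt^m$ are not eigenvectors, since $U_r$ sends them to $\sum_m(mr+1)^jt^m$.
\end{itemize}

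The correct set-up is the one in your fallback.
\begin{itemize}
\item Take the $n$-dimensional space $t\,\mathbb{Q}_n$ of numerators over $(1-t)^{n+1}$. Equivalently, these are the series $\sum_{m\ge0}f(m)t^m$ with $f$ a polynomial of degree $\le n$ and $f(0)=0$. This space is visibly stable under $f\mapsto f(r\,\cdot)$.
\item For the numerator $t^k$ with $1\le k\le n$, one has $f(m)=\tbinom{m+n-k}{n}$. The coefficient of $t^i$ in the numerator of $U_r\bigl(t^k/(1-t)^{n+1}\bigr)$ is therefore $\sum_{a\ge0}(-1)^a\tbinom{n+1}{a}\tbinom{(i-a)r+n-k}{n}$. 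As in the derivation of (\ref{Equ:c(r-1,n+1,ir-k)}), this equals $c(r-1,n+1,ir-k)$.
\item This coefficient vanishes for $i=0$ and for $i\ge n+1$, because $ir-k$ then lies outside $[0,(n+1)(r-1)]$. Hence in the basis $t,\dots,t^n$ the matrix is exactly $\boldsymbol{C}_{n,r}$, with no transpose or center-symmetry needed.
\item The eigenvectors are $\sum_m m^jt^m=tE_j(t)(1-t)^{n-j}/(1-t)^{n+1}$ for $1\le j\le n$, with eigenvalue $r^j$. The coordinates of $tE_j(t)(1-t)^{n-j}$ in $t,\dots,t^n$ are $z^n_{1,j},\dots,z^n_{n,j}$, which is the $j$-th column of $\boldsymbol{Z}_n$.
\end{itemize}
With this set-up your Step 4 goes through verbatim. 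Invertibility of $\boldsymbol{Z}_n$ is then just the linear independence of $m,m^2,\dots,m^n$.
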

We end this paper with some remarks.

\begin{remark}
 It is natural to ask for a direct proof of Corollary~\ref{Cor:C-diagonalized},
which in turn can provide an alternative proof of Lemma~\ref{Lemm:c=r-c}. Of course, we may rewrite the identity as a polynomial
of $r$ of degree not greater than $n$, and we only need to show that it holds for $r=0,1,\ldots,n$. In other words, it suffices to show that
\begin{eqnarray*}
 \mathcal{\widetilde{Z}}^n_{i,j}(r)&=&\sum_{k=1}^n\sum_{p=1}^{r}c_{i,p,n+1-k}^{n}
 \tbinom{n+r-p}{r-p}z_{k,j}^n-r^jz_{i,j}^n=0
\end{eqnarray*}
for $r=0, 1,\ldots,n$. However, for us it seems very difficult to prove.
\end{remark}

Recall that $\chi_r^n$ is the ``Polya character" of $\mathfrak{S}_n$ (see Equ.~\eqref{Equ:Polya-character}) and note that $\chi_1^n, \ldots, \chi_n^n$ is a $\mathbb{Q}$-basis of $\mathrm{CF}_{\ell_n}(\mathfrak{S}_n)$. Thanks to Equ.~\eqref{Equ:chi=Foulkes}, we yield
\begin{equation*}
  \left(\begin{array}{cccc}&&&\tbinom{n}{n}\\
  &&\tbinom{n}{n}&\tbinom{n+1}{n}\\
  &{\mathinner{\mkern2mu\raise1pt\hbox{.}\mkern2mu
\raise4pt\hbox{.}\mkern2mu\raise7pt\hbox{.}\mkern1mu}}&\vdots&\vdots \\
    \tbinom{n}{n}&\cdots&\tbinom{2n-2}{n}&\tbinom{2n-1}{n}\\
  \end{array}\right)
  \left(\begin{array}{c}
  \phi_1^n\\\phi_2^n\\\vdots\\\phi_n^n
  \end{array}
  \right)=\left(\begin{array}{c}
 \chi_1^n\\\chi_2^n\\\vdots\\\chi_n^n
  \end{array}
  \right),
\end{equation*}
or equivalently,
\begin{equation*}
    \left(\begin{array}{c}
  \phi_n^n\\\phi_{n-1}^n\\\vdots\\\phi_1^n
  \end{array}
  \right)=\left(\begin{array}{cccc}\tbinom{n+1}{0}&&&\\
  -\tbinom{n+1}{1}&\tbinom{n+1}{0}&&\\
  \vdots&\ddots&\ddots& \\
    (-1)^{n-1}\tbinom{n+1}{n-1}&\cdots&-\tbinom{n+1}{1}&\tbinom{n+1}{0}\\
  \end{array}\right)\left(\begin{array}{c}
 \chi_1^n\\\chi_2^n\\\vdots\\\chi_n^n
  \end{array}
  \right).
\end{equation*}
Finally let $\mathcal{C}_j=\{\sigma\in \mathfrak{S}_n|\ell_n(\sigma)=j\}$ and let  $\Phi=(\phi_{n+1-i}^n(\mathcal{C}_{n+1-j}))_{1\leq i,j\leq n}$ be (reduced) Foulkes character table of $\mathfrak{S}_n$. Then
\begin{equation}\label{Equ:Vandmonde}
  \Phi=\left(\begin{array}{cccc}\tbinom{n+1}{0}&&&\\
  -\tbinom{n+1}{1}&\tbinom{n+1}{0}&&\\
  \vdots&\ddots&\ddots& \\
    (-1)^{n-1}\tbinom{n+1}{n-1}&\cdots&-\tbinom{n+1}{1}&\tbinom{n+1}{0}\\
  \end{array}\right)\left(\begin{array}{cccc}
 1&1&\cdots&1\\2^n&2^{n-1}&\cdots&2\\\vdots&\vdots&&\vdots\\n^n&n^{n-1}&\cdots&n
  \end{array}
  \right).
\end{equation}
\begin{remark}
Comparing Corollary~\ref{Cor:C-diagonalized} with Equ.~\eqref{Equ:Vandmonde}, Corollary~\ref{Cor:C-diagonalized} may be related to Miller's work studying the fraction of the Foulkes character table (see \cite[Corollary~1.1 and Example~1.2]{M15}). Unfortunately, we can not clarify the relationship.
 \end{remark}


\subsection*{Acknowledgements}The authors are grateful to the anonymous referee for her/his  numerous helpful comments and corrections, which greatly improved
the presentation of this paper.

\end{document}